\newtheorem{cor}{Corollary}[section]
\newtheorem{theo}{Theorem}[section]
\newtheorem{prop}{Proposition}[section]
\newenvironment{proof}[1][Proof]{\noindent \textbf{#1.} }{\  \rule{0.5em}{0.5em}}
\begin{document}

\title{ On generalized Robertson-Walker spacetimes satisfying some curvature
condition }
\author{Kadri Arslan, Ryszard Deszcz, Ridvan Ezenta\d s, Marian Hotlo\'{s} \\
and Cengizhan Murathan}
\maketitle

\begin{abstract}
We give necessary and sufficient conditions for warped product manifolds
with $1$-dimensional base, and in particular, for generalized
Robertson-Walker spacetimes, to satisfy some generalized Einstein metric
condition. We also construct suitable examples of such manifolds. They are
quasi-Einstein or not.\footnotemark[1]
\end{abstract}

\begin{center}
\textsl{Dedicated to Professor Jacek Gancarzewicz}
\end{center}

\footnotetext[1]{%
The authors are supported by the grant F-2003/98 of the Uluda\v{g}
University of Bursa (Turkey). \newline
\textbf{Mathematics Subject Classification (2010).} Primary 53B20, 53B30,
53B50; Secondary 53C25, 53C50, 53C80. \newline
\textbf{Key words:} warped product, generalized Robertson-Walker spacetime,
quasi-Einstein manifold, generalized Einstein-metric condition,
pseudosymmetry type curvature condition, Ricci-pseudosymmetric hypersurface.}

\section{Introduction}

A semi-Rie\-man\-nian manifold $(M,g)$, $n = \dim M \geqslant 3$, is said to
be an \textsl{Einstein manifold} if at every point its Ricci tensor $S$ is
proportional to the metric tensor $g$, i.e. on $M$ we have 
\begin{equation}
S \ =\  \frac{\kappa }{n} \, g \,,  \label{einstein}
\end{equation}
where $\kappa $ is the scalar curvature of $(M,g)$. In particular, if $S$
vanishes identically on $M$ then $(M,g)$ is called a \textsl{Ricci flat
manifold}. If at every point of $M$ its Ricci tensor satisfies $\mbox{rank}%
\, S \leqslant 1$ then $(M,g)$ is called a \textsl{Ricci-simple manifold}
(see e.g. \cite{DH2, G6}.

Let $(M,g)$, $n \geqslant 3$, be a semi-Rie\-man\-nian manifold and let $%
U_{S}$ be the set of all points of $M$ at which $S \neq \frac{\kappa }{n} \,
g$. The manifold $(M,g)$, $n \geqslant 3$, is said to be \textsl{%
quasi-Einstein} (see e.g. \cite{AKH} and \cite{DGHSaw} and references
therein) if at every point of $U_{S} \subset M$ we have $\mbox{rank}\, (S -
\alpha \, g) = 1$, for certain $\alpha \in {\mathbb{R}}$.

For the curvature tensor $R$ and the Weyl conformal curvature tensor $C$ of $%
(M,g)$, $n \geqslant 4$, we can define on $M$ the $(0,6)$-tensors $R \cdot C$
and $C \cdot R$. For precise definition of the symbols used, we refer to
Sections 2 and 3 of this paper, as well as to \cite{BDGHKV, DGHSaw, DGHV,
DHS}. It is obvious that for any Ricci flat, as well as conformally flat,
semi-Riemannian manifold $(M,g)$, $n \geqslant 4$, we have $R \cdot C - C
\cdot R = 0$. For non-Ricci flat Einstein manifold the tensor $R \cdot C - C
\cdot R$ is non-zero. Namely, any Einstein manifold $(M,g)$, $n \geqslant 4$%
, satisfies (\cite{DHS}, Theorem 3.1) 
\begin{eqnarray}
R \cdot C - C \cdot R \ = \  \frac{\kappa }{(n-1) n} \, Q(g,R) \ = \  \frac{%
\kappa }{(n-1) n} \, Q(g,C)\,,  \label{genein1}
\end{eqnarray}
i.e. at every point of $M$ the difference tensor $R \cdot C - C \cdot R$ and
the Tachibana tensor $Q(g,R)$, or $Q(g,C)$, are linearly dependent. We also
mention that for any semi-Riemannian manifold $(M,g)$, $n \geqslant 4$, we
have some identity (see eq. (\ref{ge})) which express the tensor $R \cdot C
- C \cdot R$ by some $(0,6)$-tensors. In particular, by making use of that
identity we can express the difference tensor of some hypersurfaces in space
forms by a linear combination of the Tachibana tensors $Q(g,R)$ and $Q(S,R)$
(\cite{DGHV}, Theorem 3.2; see also our Theorem 6.1(ii) and Proposition 4.1).

We also can investigate semi-Riemannian manifolds $(M,g)$, $n \geqslant 4$,
for which the difference tensor $R \cdot C - C \cdot R$ is expressed by one
of the following Tachibana tensors: $Q(g,R)$, $Q(g,C)$, $Q(S,R)$ or $Q(S,C)$%
. In this way we obtain four curvature conditions. First results related to
those conditions are given in \cite{DHS}. We refer to \cite{DGHSaw} for a
survey on this subject. Since these conditions are satisfied on any
semi-Riemannian Einstein manifold, they can be named \textsl{generalized
Einstein metric conditions} (cf. \cite{Besse}, Chapter XVI). In particular, (%
\ref{genein1}) is also a condition of this kind and in Section 2 we present
results on manifolds satisfying 
\begin{eqnarray}
R \cdot C - C \cdot R \ = \ L_{1} \, Q(g,R) \,.  \label{geneintsu}
\end{eqnarray}
In this paper we restrict our investigations to non-Einstein and
non-conformally flat semi-Riemannian manifolds $(M,g)$, $n \geqslant 4$,
satisfying on $U=\{x\in M\,:\,Q(S,R)\neq 0\ {\mbox{at}}\ x\}$ the condition 
\begin{equation}
R\cdot C - C\cdot R\ =\ L\, Q(S,R)\,,  \label{a1}
\end{equation}
where $L$ is some function on this set. We recall that at all points of a
semi-Riemannian manifold $(M,g)$, $n \geqslant 3$, at which its Ricci tensor 
$S$ is non-zero and $Q(S,R)=0$ we have (\cite{DD}, Theorem 4.1) 
\begin{equation}
R\cdot R\ =\ 0 \,,  \label{semisymmetry}
\end{equation}
i.e. such manifold is \textsl{semisymmetric}. We also recall that if 
\begin{equation}
R\cdot S\ =\ 0  \label{Ricci-semisymmetry}
\end{equation}
holds on a semi-Riemannian manifold then it is called \textsl{%
Ricci-semisymmetric}. The condition (\ref{a1}), under some additional
assumptions, was considered in \cite{H}. We have

\begin{theo}[\protect \cite{H}, Theorem 3.1 and Proposition 3.1]
(i) Let $(M,g)$, $n\geqslant 4$, be a non-conformally flat and non-Einstein
Ricci-semisymmetric manifold satisfying (\ref{a1}). Then on the set
consisting of all points of $M$ at which $L$ is non-zero we have $L=\frac{1}{%
n-2}$ and 
\begin{equation}
R(\mathcal{S }X,Y,Z,W)\ =\  \frac{\kappa}{n-1}\,R(X,Y,Z,W)\,.  \label{H1}
\end{equation}
(ii) Let $(M,g)$, $n\geqslant 4$, be a semi-Riemannian manifold satisfying (%
\ref{H1}). Then $(M,g)$ is a Ricci-semisymmetric manifold fulfilling (\ref%
{a1}) with $L=\frac{1}{n-2}$.
\end{theo}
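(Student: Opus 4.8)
The plan is to reduce everything to the decomposition of the Weyl tensor together with the general expansion of $R\cdot C - C\cdot R$ alluded to in the Introduction. Writing $C = R - \frac{1}{n-2}\,(g\wedge S) + \frac{\kappa}{(n-1)(n-2)}\,G$, where $g\wedge S$ is the Kulkarni--Nomizu product and $G=\frac12\,g\wedge g$, and setting $P = g\wedge S - \frac{\kappa}{n-1}\,G$ so that $C = R - \frac{1}{n-2}\,P$, the derivation property of the operators gives $R\cdot C - C\cdot R = \frac{1}{n-2}\,(P\cdot R - R\cdot P)$. First I would record the elementary facts $R\cdot g = 0$, $R\cdot G = 0$, $R\cdot(g\wedge S) = g\wedge(R\cdot S)$ and $G\cdot R = Q(g,R)$; these yield the identity
\[
R\cdot C - C\cdot R \;=\; \frac{1}{n-2}\Big[(g\wedge S)\cdot R - \frac{\kappa}{n-1}\,Q(g,R) - g\wedge(R\cdot S)\Big],
\]
valid on any semi-Riemannian manifold.

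The key computational step is to expand $(g\wedge S)\cdot R$. The operator associated with $g\wedge S$ is $\mathcal{B}(X,Y) = \mathcal{S}X\wedge_g Y + X\wedge_g\mathcal{S}Y$, and applying it as a derivation to $R$ and regrouping I expect to obtain
\[
(g\wedge S)\cdot R \;=\; Q(S,R) + \frac{\kappa}{n-1}\,Q(g,R) + \mathcal{T},
\]
where $\mathcal{T}$ is a $(0,6)$-tensor obtained by wedging $g$ with the $(0,4)$-tensor $W$ defined by $W(X,Y,Z,V) = R(\mathcal{S}X,Y,Z,V) - \frac{\kappa}{n-1}\,R(X,Y,Z,V)$. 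By construction $\mathcal{T}$ is a tensor of this Kulkarni--Nomizu type that vanishes if and only if $W=0$, i.e.\ if and only if (\ref{H1}) holds. Substituting into the previous identity and using $R\cdot S = 0$ under the Ricci-semisymmetry hypothesis produces the master relation
\[
R\cdot C - C\cdot R \;=\; \frac{1}{n-2}\big[\,Q(S,R) + \mathcal{T}\,\big].
\]

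Part (ii) then follows quickly. First I would verify directly, using only the symmetries of $R$, that (\ref{H1}) implies $R\cdot S = 0$: contracting $\mathcal{S}$ into the two curvature factors occurring in $R\cdot S$ and invoking (\ref{H1}) in both slots produces two terms that cancel by the antisymmetry of $R$ in its last pair, so $(M,g)$ is Ricci-semisymmetric. Since (\ref{H1}) also means $W=0$, hence $\mathcal{T}=0$, the master relation reduces to $R\cdot C - C\cdot R = \frac{1}{n-2}\,Q(S,R)$, which is exactly (\ref{a1}) with $L=\frac{1}{n-2}$.

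For part (i), combining (\ref{a1}) with the master relation gives the single tensor equation
\[
\Big(L-\frac{1}{n-2}\Big)\,Q(S,R) \;=\; \frac{1}{n-2}\,\mathcal{T}.
\]
On the set where $Q(S,R)\neq 0$ and $L\neq 0$ I would argue that the two sides are incompatible unless both coefficients vanish: apply to this equation a trace operation that annihilates every tensor $\mathcal{T}$ of Kulkarni--Nomizu type but does not annihilate $Q(S,R)$ (here the hypotheses that $(M,g)$ is non-Einstein and non-conformally flat are what guarantee, on $U$, that $Q(S,R)$ is not itself of that type). This forces $L=\frac{1}{n-2}$, whence $\mathcal{T}=0$, and a complementary contraction of $\mathcal{T}=0$ recovers $W$ up to its traces and yields $W=0$, i.e.\ (\ref{H1}). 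The main obstacle is precisely this separation argument: choosing the correct contractions and showing, via the non-Einstein and non-conformally-flat assumptions, both that a nonzero multiple of $Q(S,R)$ cannot equal the wedge tensor $\mathcal{T}$ and that $\mathcal{T}=0$ inverts to give $W=0$. The regrouping that produces $\mathcal{T}$ in the second step is also delicate, since one must track the Ricci-contractions of $R$ in each of the four slots and reorganize them into a single such expression.
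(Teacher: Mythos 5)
You should know at the outset that the paper never proves this theorem: it quotes it from \cite{H}, so the only internal benchmark is the identity (\ref{ge}) of Section 4, which is the engine any proof of this statement runs on. Your preparatory computations are correct and in fact reconstruct that identity: the decomposition $C=R-\frac{1}{n-2}(g\wedge S-\frac{\kappa}{n-1}G)$, the rules $R\cdot g=0$, $R\cdot G=0$, $G\cdot R=Q(g,R)$, $R\cdot(g\wedge S)=g\wedge(R\cdot S)$, and your regrouping of $(g\wedge S)\cdot R$ all check out; your $\mathcal{T}$ is the paper's tensor $P$ rewritten in terms of $W=V-\frac{\kappa}{n-1}R$, where $V_{hijk}=S_h^{\ l}R_{lijk}$, and under $R\cdot S=0$ your master relation $(n-2)(R\cdot C-C\cdot R)=Q(S,R)+\mathcal{T}$ is exactly (\ref{ge}). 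Granting this, your part (ii) is complete and correct.

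Part (i) is where the theorem really lives, and there your plan has a genuine gap: the step you defer as ``the main obstacle'' is the whole content, and the mechanism you propose for it cannot work as stated. You never use the hypothesis $L\neq 0$, yet the theorem confines its conclusion to the set where $L\neq 0$ for a reason: at a point of $U$ with $L=0$ (nothing in your argument rules such points out, and the formulation of the theorem is built around their possibility) your equation $((n-2)L-1)\,Q(S,R)=\mathcal{T}$ reads $Q(S,R)=-\mathcal{T}$, so at such a point $Q(S,R)$ is itself a tensor of the very type your trace operation is supposed to annihilate; a single such point makes it impossible for any universal linear operation to kill every admissible $\mathcal{T}$ while not killing $Q(S,R)$. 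So either you must prove that $L=0$ and $Q(S,R)\neq 0$ cannot coexist under the hypotheses (a substantive claim, nowhere addressed, and stronger than what the quoted theorem asserts), or the separation must use $L\neq 0$ in an essential way, and your outline has no place where it enters.

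There is a second, independent gap at the end. The claim ``$\mathcal{T}$ vanishes if and only if $W=0$'' is asserted ``by construction'', but only the trivial direction is free. Under $R\cdot S=0$ the tensor $W$ is antisymmetric in both index pairs and pair symmetric, and your $\mathcal{T}$ is then precisely the Tachibana tensor $Q(g,W)$; by the standard fact recalled in Section 2 of the paper ($Q(g,T)=0$ at a point exactly when $T$ is the appropriate multiple of $G$), $\mathcal{T}=0$ yields only $W=\lambda\,G$ for some function $\lambda$, i.e. $R(\mathcal{S}X,Y,Z,V)=\frac{\kappa}{n-1}\,R(X,Y,Z,V)+\lambda\,G(X,Y,Z,V)$, which is strictly weaker than (\ref{H1}) (it amounts to a quadratic equation for the Ricci operator, satisfied e.g. by suitable two-eigenvalue Ricci operators). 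Ruling out $\lambda\neq 0$ requires going back to the remaining components of the equation and using the non-Einstein and non-conformally-flat hypotheses; none of that is in your proposal. This also shows that ``Kulkarni--Nomizu type'' is the wrong description of $\mathcal{T}$: what you actually need is a separation lemma for the two Tachibana tensors $Q(S,R)$ and $Q(g,W)$, which is genuine algebraic content, not a trace computation.
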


We consider warped products $\overline{M} \times _{F} \widetilde{N}$ with $1$%
-dimensional base manifold $(\overline{M}, \bar{g})$ satisfying (\ref{a1}).
Evidently, generalized Robertson-Walker spacetimes are warped products of
this type. We investigate separately two cases when the fibre $(\widetilde{N}%
, \tilde{g})$ is either non-Einstein either Einstein manifold. In the first
case we prove that the associated function $L$ satisfies $L = \frac{1}{n-2}$
and we show that the warping function $F$ is a polynomial of the 2nd degree.
Moreover, $(\widetilde{N}, \tilde{g})$ satisfies some curvature condition
(see eq. (\ref{SR2})). In the second case, i.e. when $\overline{M} \times
_{F} \widetilde{N}$ is a quasi-Einstein manifold, we show that two subcases
are possible. The first one leads to the same $L$ and $F$ as in the
non-Einstein case. The second subcase leads to $L = \frac{1}{n-1}$ and to
another forms of $F$ (see Theorem 5.1). We also give converse statements.
Basing on these results we give examples of warped products satisfying (\ref%
{a1}). Finally, we mention that recently hypersurfaces in space forms having
the tensor $R \cdot C - C \cdot R$ expressed by some Tachibana tensors has
been investigated in \cite{DGPSS}. \newline

\noindent \textbf{Acknowledgments.} The first, third and fifth named author
would express their thanks to the Institute of Mathematics and Computer
Science of the Wroc\l aw University of Technology as well as to the
Department of Mathematics of the Wroc\l aw University of Environmental and
Life Sciences for the hospitality during their stay in Wroc\l aw. The second
and fourth named author would express their thanks to the Department od
Mathematics of the Uluda\v{g} University in Bursa for the hospitality during
their stay in Bursa.

\section{Preliminaries}

Throughout this paper all manifolds are assumed to be connected paracompact
manifolds of class $C^{\infty }$. Let $(M,g)$ be an $n$-dimensional
semi-Riemannian manifold and let $\nabla$ be its Levi-Civita connection and $%
\mathfrak{X}(M)$ the Lie algebra of vector fields on $M$. We define on $M$
the endomorphisms $X \wedge _{A} Y$ and ${\mathcal{R}}(X,Y)$ of $\mathfrak{X}%
(M)$ by 
\begin{eqnarray*}
(X \wedge _{A} Y)Z &=& A(Y,Z)X - A(X,Z)Y\,, \\
{\mathcal{R}}(X,Y)Z &=& \nabla _X \nabla _Y Z - \nabla _Y \nabla _X Z -
\nabla _{[X,Y]}Z\,,
\end{eqnarray*}
respectively, where $A$ is a symmetric $(0,2)$-tensor on $M$ and $X, Y, Z
\in \mathfrak{X}(M)$. The Ricci tensor $S$, the Ricci operator ${\mathcal{S}}
$ and the scalar curvature $\kappa $ of $(M,g)$ are defined by $S(X,Y) = tr
\{ Z \rightarrow {\mathcal{R}}(Z,X)Y \}$, $g({\mathcal{S}}X,Y)\, =\, S(X,Y)$
and $\kappa = tr\, {\mathcal{S}}$, respectively. The endomorphism ${\mathcal{%
C}}(X,Y)$ is defined by 
\begin{eqnarray*}
{\mathcal{C}}(X,Y)Z &=& {\mathcal{R}}(X,Y)Z - \frac{1}{n-2}(X \wedge _{g} {%
\mathcal{S}}Y + {\mathcal{S}}X \wedge _{g} Y - \frac{\kappa}{n-1}X \wedge
_{g} Y)Z\,,
\end{eqnarray*}
assuming that $n\geqslant 3$. Now the $(0,4)$-tensor $G$, the
Riemann-Christoffel curvature tensor $R$ and the Weyl conformal curvature
tensor $C$ of $(M,g)$ are defined by 
\begin{eqnarray*}
G(X_1,X_2,X_3,X_4) &=& g((X_1 \wedge _{g} X_2)X_3,X_4)\,, \\
R(X_1,X_2,X_3,X_4) &=& g({\mathcal{R}}(X_1,X_2)X_3,X_4)\,, \\
C(X_1,X_2,X_3,X_4) &=& g({\mathcal{C}}(X_1,X_2)X_3,X_4)\,,
\end{eqnarray*}
respectively, where $X_1,X_2,\ldots \in \mathfrak{X}(M)$. Further we define
the following sets $U_{R}=\{x\in M\, :\, R \neq \frac{\kappa }{(n-1) n} G\ {%
\mbox{at}}\ x\}$, $U_{S}=\{x\in M\, :\, S \neq \frac{\kappa }{n} g \ {%
\mbox{at}}\ x\}$ and $U_{C}=\{x\in M\, :\, C \neq 0\ {\mbox{at}}\ x\}$. It
is easy to see that $U_{S} \cap U_{C} \subset U_{R}$.

Let ${\mathcal{B}}(X,Y)$ be a skew-symmetric endomorphism of $\mathfrak{X}(M)
$ and let $B$ be a $(0,4)$-tensor associated with ${\mathcal{B}}(X,Y)$ by 
\begin{eqnarray}
B(X_1,X_2,X_3,X_4) &=& g({\mathcal{B}}(X_1,X_2)X_3,X_4)\,.  \label{DS5}
\end{eqnarray}
The tensor $B$ is said to be a \textsl{generalized curvature tensor} if 
\begin{eqnarray*}
& & B(X_1,X_2,X_3,X_4) + B(X_2,X_3,X_1,X_4) + B(X_3,X_1,X_2,X_4) \ =\ 0\,, \\
& & B(X_1,X_2,X_3,X_4) \ =\ B(X_3,X_4,X_1,X_2) \,.
\end{eqnarray*}

Let ${\mathcal{B}}(X,Y)$ be a skew-symmetric endomorphism of $\mathfrak{X}(M)
$ and let $B$ be the tensor defined by (\ref{DS5}). We extend the
endomorphism ${\mathcal{B}}(X,Y)$ to derivation ${\mathcal{B}}(X,Y) \cdot \, 
$ of the algebra of tensor fields on $M$, assuming that it commutes with
contractions and $\ {\mathcal{B}}(X,Y) \cdot \, f = 0$, for any smooth
function $f$ on $M$. Now for a $(0,k)$-tensor field $T$, $k \geqslant 1$, we
can define the $(0,k+2)$-tensor $B \cdot T$ by 
\begin{eqnarray*}
& & (B \cdot T)(X_1,\ldots ,X_k;X,Y) \ =\ ({\mathcal{B}}(X,Y) \cdot
T)(X_1,\ldots ,X_k) \\
&=& - T({\mathcal{B}}(X,Y)X_1,X_2,\ldots ,X_k) - \cdots - T(X_1,\ldots
,X_{k-1},{\mathcal{B}}(X,Y)X_k)\,.
\end{eqnarray*}
In addition, if $A$ is a symmetric $(0,2)$-tensor then we define the $(0,k+2)
$-tensor $Q(A,T)$, named a \textsl{Tachibana tensor} (\cite{DGPSS}), by 
\begin{eqnarray*}
& & Q(A,T)(X_1, \ldots , X_k; X,Y) \ =\ (X \wedge _{A} Y \cdot T)(X_1,\ldots
,X_k) \\
&=&- T((X \wedge _A Y)X_1,X_2,\ldots ,X_k) - \cdots - T(X_1,\ldots
,X_{k-1},(X \wedge _A Y)X_k)\, .
\end{eqnarray*}
In this manner we obtain the $(0,6)$-tensors $B \cdot B$ and $Q(A,B)$.
Setting in the above formulas ${\mathcal{B}} = {\mathcal{R}}$ or ${\mathcal{B%
}} = {\mathcal{C}}$, $T=R$ or $T=C$ or $T=S$, $A=g$ or $A=S$, we get the
tensors $R\cdot R$, $R\cdot C$, $C\cdot R$, $R\cdot S$, $Q(g,R)$, $Q(S,R)$, $%
Q(g,C)$ and $Q(g,S)$.

Let $B_{hijk}$, $T_{hijk}$, and $A_{ij}$ be the local components of
generalized curvature tensors $B$ and $T$ and a symmetric $(0,2)$-tensor $A$
on $M$, respectively, where $h,i,j,k,l,m,p,q \in \{ 1,2, \ldots , n \}$. The
local components $(B \cdot T)_{hijklm}$ and $Q(A,T)_{hijklm}$ of the tensors 
$B \cdot T$ and $Q(A,T)$ are the following 
\begin{eqnarray*}
(B \cdot T)_{hijklm} &=& g^{pq}(T_{pijk}B_{ qhlm}+ T_{hpjk}B_{ qilm} +
T_{hipk}B_{ qjlm}+ T_{hijp}B_{ qklm})\,,
\end{eqnarray*}
\begin{eqnarray}
Q(A,T)_{hijklm}&=&A_{hl}T_{ mijk} + A_{il}T_{ hmjk}+ A_{jl}T_{ himk}+
A_{kl}T_{ hijm} \\
&-& A_{hm}T_{ lijk}- A_{im}T_{ hljk}- A_{jm}T_{ hilk} - A_{km}T_{ hijl} \,. 
\nonumber  \label{qat}
\end{eqnarray}
For a symmetric $(0,2)$-tensor $E$ and a $(0,k)$-tensor $T$, $k \geqslant 2$%
, we define their Kulkarni-Nomizu product $E \wedge T$ by (e.g. see \cite%
{DeGlo1}) 
\begin{eqnarray*}
& &(E \wedge T )(X_{1}, X_{2}, X_{3}, X_{4}; Y_{3}, \ldots , Y_{k}) \\
&=& E(X_{1},X_{4}) T(X_{2},X_{3}, Y_{3}, \ldots , Y_{k}) + E(X_{2},X_{3})
T(X_{1},X_{4}, Y_{3}, \ldots , Y_{k} ) \\
& & - E(X_{1},X_{3}) T(X_{2},X_{4}, Y_{3}, \ldots , Y_{k}) - E(X_{2},X_{4})
T(X_{1},X_{3}, Y_{3}, \ldots , Y_{k})\,.
\end{eqnarray*}
According to \cite{DGPSS}, the tensor $E \wedge T$ is called a \textsl{%
Kulkarni-Nomizu tensor}. Clearly, the tensors $R$, $C$, $G$ and $E \wedge F$%
, where $E$ and $F$ are symmetric $(0,2)$-tensors, are generalized curvature
tensors.

A semi-Riemannian manifold $(M,g)$, $n \geqslant 3$, is said to be \textsl{%
locally symmetric} if $\nabla R = 0$ holds on $M$. It is obvious that the
last condition leads immediately to the integrability condition (\ref%
{semisymmetry}). Manifolds satisfying (\ref{semisymmetry}) are called \textsl%
{semisymmetric }. A weaker condition than (\ref{semisymmetry}) there is 
\begin{eqnarray}
R \cdot R = L_{R} Q(g,R) \,,  \label{pseudosym}
\end{eqnarray}
which is considered on $U_{R} \subset M$, hence $L_{R}$ is a function
uniquely determined on this set. On $M \setminus U_{R}$ we have $R \cdot R =
Q(g,R) = 0$. We note that $Q(g,R) = 0$ at a point if and only if $R = \frac{%
\kappa }{(n - 1) n} G$ at this point. A semi-Riemannian manifold $(M,g)$, $n
\geqslant 3$, is said to be \textsl{pseudosymmetric} if $(\ref{pseudosym})$
holds on $U_{R} \subset M$ (\cite{BDGHKV, D 10, DHV, HV}).

A semi-Riemannian manifold $(M,g)$, $n \geqslant 3$, is said to be \textsl{%
Ricci-symmetric} if $\nabla R = 0$ holds on $M$. It is obvious that the last
condition leads immediately to the integrability condition (\ref%
{Ricci-semisymmetry}). Manifolds satisfying (\ref{Ricci-semisymmetry}) are
called \textsl{Ricci-semisymmetric}. A weaker condition than (\ref%
{Ricci-semisymmetry}) there is 
\begin{eqnarray}
R \cdot S = L_{S} Q(g,S) \,,  \label{Ricci-pseudosym}
\end{eqnarray}
which is considered on $U_{S} \subset M$, hence $L_{S}$ is a function
uniquely determined on this set. On $M \setminus U_{S}$ we have $R \cdot S =
Q(g,S) = 0$. We note that $Q(g,S) = 0$ at a point if and only if (\ref%
{einstein}) holds at this point (cf. \cite{DD}, Lemma 2.1 (i)). A
semi-Riemannian manifold $(M,g)$, $n \geqslant 3$, is said to be \textsl{%
Ricci-pseudosymmetric} if $(\ref{Ricci-pseudosym})$ holds on $U_{S} \subset M
$ (\cite{BDGHKV, D 10, DHV, JHSV}).

Every locally symmetric, resp. semisymmetric and pseudosymmetric, manifold
is Ricci-symmetric, resp. Ricci-semisymmetric, Ricci-pseudosymmetric. In all
cases, the converse statements are not true. We refer to \cite{BDGHKV}, \cite%
{DGHSaw}, \cite{DHV} and \cite{HaVer7} for a wider presentation of results
related to these classes of manifolds.

A geometric interpretation of (\ref{pseudosym}), resp. (\ref{Ricci-pseudosym}%
), is given in \cite{HV}, resp. in \cite{JHSV}. Semi-Riemannian manifolds
for which their curvature tensor $R$ is expressed by a linear combination of
the Kulkarni-Nomizu tensors $S \wedge S$, $g \wedge S$ and $G$ are called
Roter type manifolds, see e.g. \cite{DeSche1} and references therein.
Precisely, a semi-Riemannian manifold $(M,g)$, $n \geqslant 4$, is said to
be a \textsl{Roter type manifold} if 
\begin{eqnarray}
R &=& \frac{\phi}{2} \, S \wedge S + \mu \, g \wedge S + \eta \, G \,,
\label{R}
\end{eqnarray}
holds on the set $U_{1}$ of all points of $U_{S} \cap U_{C} \subset M$ at
which $\mbox{rank}\, (S - \alpha \, g) \geqslant 2$, for every $\alpha \in {%
\mathbb{R}}$. It is easy to prove that the functions $\phi $, $\mu $ and $%
\eta $ are uniquely determined on $U_{1}$. Using (\ref{R}) and suitable
definitions we can verify that on $U_{1}$ the condition (\ref{pseudosym}) is
satisfied (e.g. see \cite{DeSche1}, eqs. (7) and (8); \cite{DGHSaw}, Theorem
6.7) with $L_{R} = \phi ^{-1} ( (n-2) (\mu ^{2} - \phi \eta) - \mu )$, and
that the difference tensor $R \cdot C - C \cdot R$ is expressed on $U_{1}$
by a linear combination of the tensors $Q(S,R)$, $Q(g,R)$ and $Q(S,G)$ (\cite%
{DGHV}, eq. (47)), or equivalently, by a linear combination of the tensors $%
Q(g,R)$ and $Q(S,G)$ (\cite{DGHV}, eq. (48)).

Semi-Riemannian manifolds $(M,g)$, $n \geqslant 4$, satisfying (\ref%
{geneintsu}) on $U_{S} \cap U_{C} \subset M$ were investigated in \cite{DH2}%
. Among other results it was proved: (i) $R \cdot C = C \cdot R = 0$ and $%
\mbox{rank}\, S = 1$ hold on $U_{S} \cap U_{C}$, provided that $(M,g)$ is a
quasi-Einstein manifold and (ii) (\ref{R}), with some special coefficients $%
\phi, \mu , \eta$ such that $R \cdot R = 0$, and $C \cdot R = - L_{1} \,
Q(g,R)$ hold on $U_{S} \cap U_{C}$, provided that $(M,g)$ is a
non-quasi-Einstein manifold. We also mention that manifolds satisfying 
\begin{eqnarray}
C \cdot R &=& L \, Q(g,R)\,,  \label{R77}
\end{eqnarray}
were investigated in \cite{P72}. Furthermore, we have

\begin{theo}[\protect \cite{DHS}, Theorem 4.1 and Corollary 4.1]
Let $(M,g)$, $n \geq 4$, be a semi-Riemannian manifold. If $R \cdot C - C
\cdot R \, =\, L\, Q(g,C)$ holds on $U_{S} \cap U_{C} \subset M$, for some
function $L$, then $R \cdot R \, =\, L\, Q(g,R)$ and $C \cdot R \, =\, 0$ on
this set. In particular, if $R \cdot C \, =\, C \cdot R$ holds on $U_{S}
\cap U_{C}$ then $R \cdot R \, =\, R \cdot C \, =\, C \cdot R \, =\, 0$ on
this set.
\end{theo}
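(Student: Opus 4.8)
The backbone of the argument is the Weyl decomposition together with the ``derivation calculus'' of the operators $\cdot\,$ and $Q(\cdot,\cdot)$. Writing $C = R - \frac{1}{n-2}\, g\wedge S + \frac{\kappa}{(n-1)(n-2)}\, G$, I would first record the elementary identities that hold because $\mathcal{R}(X,Y)$ and $\mathcal{C}(X,Y)$ are skew-symmetric with respect to $g$ and $\nabla g = 0$: namely $R\cdot G = 0$, $C\cdot G = 0$, $\ G\cdot T = Q(g,T)$ for every tensor $T$, together with the Leibniz-type rules $R\cdot(g\wedge S) = g\wedge(R\cdot S)$, $\ Q(g,g\wedge S) = g\wedge Q(g,S)$ and $Q(g,G)=0$. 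Substituting the decomposition into $R\cdot C$ and into $Q(g,C)$ then gives $R\cdot C = R\cdot R - \frac{1}{n-2}\, g\wedge(R\cdot S)$ and $Q(g,C) = Q(g,R) - \frac{1}{n-2}\, g\wedge Q(g,S)$, while expanding $C\cdot R$ produces $(g\wedge S)\cdot R$, which unwinds into $Q(S,R)$ plus terms built from the one-index Ricci contraction $g^{pq}S_{qm}R_{p\ldots}$ of $R$. Inserting all of this into the hypothesis $R\cdot C - C\cdot R = L\,Q(g,C)$ turns it into a single $(0,6)$-tensor identity in $R$, $S$, $g$ and $\kappa$; this is the fundamental identity (\ref{ge}) alluded to in the Introduction.

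The first, and cleanest, consequence comes from contracting the hypothesis over a pair of the four free curvature slots (the Ricci contraction). Since $C$ is totally trace-free and both $\mathcal{R}(X,Y)\cdot$ and $X\wedge_g Y\cdot$ commute with contractions, one has $tr\,(R\cdot C) = R\cdot(tr\, C)=0$ and $tr\,Q(g,C) = Q(g,tr\, C)=0$, whereas $tr\,(C\cdot R) = C\cdot S$. Hence the whole equation collapses to $C\cdot S = 0$ on $U_{S}\cap U_{C}$. Expanding $C\cdot S$ by the decomposition and using the identity $(g\wedge S)\cdot S = Q(g,S^{2})$ (where $S^{2}(X,Y)=S(\mathcal{S}X,Y)$), this reads, after a short computation, $R\cdot S = \frac{1}{n-2}\,Q(g,S^{2}) - \frac{\kappa}{(n-1)(n-2)}\,Q(g,S)$.

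To finish I would rearrange the hypothesis, using the two expansions above, into the form
\[
C\cdot R \ =\ \bigl(R\cdot R - L\,Q(g,R)\bigr)\ -\ \frac{1}{n-2}\, g\wedge\bigl(R\cdot S - L\,Q(g,S)\bigr),
\]
so that the assertion is exactly that both summands on the right vanish. Since a single contraction of this relation merely reproduces $C\cdot S=0$ and yields nothing new, the separation must use the explicit fundamental identity (\ref{ge}): substituting it into $R\cdot C - C\cdot R = L\,Q(g,C)$, recalling $Q(g,C) = Q(g,R) - \frac{1}{n-2}g\wedge Q(g,S)$, and using $C\cdot S=0$ (equivalently the expression for $R\cdot S$ just found) to eliminate $R\cdot S$, one obtains an identity among the building blocks $Q(S,R)$, $Q(g,R)$, $g\wedge Q(g,S)$ and the Ricci-contraction tensor. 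Comparing the coefficients of these independent pieces should force $C\cdot R = 0$, and then the displayed relation gives $R\cdot R = L\,Q(g,R)$, as required.

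The hard part is precisely this last separation. The main obstacle is the bookkeeping of the one-index Ricci-contraction terms of $R$ occurring in $(g\wedge S)\cdot R$: these do not reduce to $Q(S,R)$ nor to $g\wedge(R\cdot S)$, and showing that the $g\wedge(\cdot)$ contributions genuinely decouple from the ``Weyl-type'' contribution $C\cdot R$ — i.e.\ establishing the linear independence of the building blocks pointwise on $U_{S}\cap U_{C}$ — is the technical heart of the proof, and is where the explicit form of (\ref{ge}) is indispensable. The ``in particular'' statement is then immediate: if $R\cdot C = C\cdot R$ on $U_{S}\cap U_{C}$, the hypothesis holds with $L\equiv 0$, so the first part yields $R\cdot R = 0$ and $C\cdot R = 0$, and hence $R\cdot C = C\cdot R = 0$ on this set.
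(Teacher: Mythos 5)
You should first note that the paper itself contains no proof of this statement: it is quoted verbatim from \cite{DHS} (Theorem 4.1 and Corollary 4.1) as background in Section 2, so your argument can only be judged on its own completeness. Your preparatory work is correct: the Leibniz-type identities $R\cdot G=0$, $R\cdot (g\wedge S)=g\wedge (R\cdot S)$, $Q(g,g\wedge S)=g\wedge Q(g,S)$, $Q(g,G)=0$ all hold; the expansions $R\cdot C=R\cdot R-\frac{1}{n-2}\,g\wedge (R\cdot S)$ and $Q(g,C)=Q(g,R)-\frac{1}{n-2}\,g\wedge Q(g,S)$ follow; the trace argument (the derivations commute with contractions and $C$ is trace-free) correctly yields $C\cdot S=0$ on $U_{S}\cap U_{C}$; and the rearrangement $C\cdot R=\bigl(R\cdot R-L\,Q(g,R)\bigr)-\frac{1}{n-2}\,g\wedge \bigl(R\cdot S-L\,Q(g,S)\bigr)$ is also correct, as is your observation that contracting it gives nothing beyond $C\cdot S=0$.

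The genuine gap is that everything the theorem asserts lies in the final ``separation'' step, and you do not carry it out: you say that comparing coefficients of ``independent pieces'' \emph{should} force $C\cdot R=0$, and you yourself flag the needed independence as the unestablished technical heart. Worse, the mechanism you propose cannot work as stated. The tensor $C\cdot R$ is not a building block to be compared coefficient-wise against $Q(S,R)$, $Q(g,R)$, $g\wedge Q(g,S)$ and the Ricci-contraction terms --- it \emph{is} an unknown combination of exactly such terms, and no pointwise linear independence of these objects holds in general (on conformally flat, Einstein, or quasi-Einstein manifolds many of them coincide or vanish, which is why the set $U_{S}\cap U_{C}$ and a finer argument are needed). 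What actually powers proofs of this type in the literature the paper leans on is a different input, absent from your proposal: symmetry behaviour under the cyclic sum over the three argument pairs --- Walker's identity for $R\cdot R$, the vanishing of the cyclic sum of any Tachibana tensor $Q(A,T)$ with $A$ symmetric and $T$ a generalized curvature tensor, and the statement relating the cyclic sums of $R\cdot C-C\cdot R$ and $R\cdot C$ (Proposition 4.1 of \cite{DGHV}, which the present paper invokes in Section 4), all fed into the explicit identity (\ref{ge}). These cyclic-sum identities, not pointwise independence, are what discriminate between the pieces; without them your argument establishes $C\cdot S=0$ but neither $C\cdot R=0$ nor $R\cdot R=L\,Q(g,R)$. (Your ``in particular'' deduction is fine, but only once the first part is granted.)
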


Quasi-Einstein hypersurfaces isometrically immersed in spaces of constant
curvature were investigated in \cite{DHS2, G6}, see also references therein.
In particular, in \cite{DHS2} an example of a quasi-Einstein hypersurface in
a semi-Riemannian space of constant curvature was found. More precisely, in
that paper it was shown that some warped product $\overline{M} \times _{F} 
\widetilde{N}$, with $\dim \overline{M} = 1$ and $\dim \widetilde{N}
\geqslant 4$, can be locally realized as a non-pseudosymmetric
Ricci-pseudosymmetric quasi-Einstein hypersurface in a semi-Riemannian space
of constant curvature. The difference tensor of that hypersurface is
expressed by a linear combination of the tensors $Q(g,R)$ and $Q(S,R)$.

\section{Warped product manifolds}

Warped products play an important role in Riemannian geometry (see e.g. \cite%
{K 2, ON}) as well as in the general relativity theory (see e.g. \cite{BE,
BEP, DK, ON}). Many well-known spacetimes of this theory, i.e. solutions of
the Einstein field equations, are warped products, e.g. the Schwarzschild,
Kottler, Reissner-Nordstr\"{o}m, Reissner-Nordstr\"{o}m-de Sitter, Vaidya,
as well as Robertson-Walker spacetimes. We recall that a warped product $%
\overline{M} \times _F \widetilde{N}$ of a $1$-dimensional manifold $(%
\overline{M}, \bar{g})$, $\bar{g}_{11} = - 1$, and a $3$-dimensional
Riemannian space of constant curvature $(\widetilde{N}, \tilde{g})$, with a
warping function $F$, is said to be a \textsl{Robertson-Walker spacetime}
(see e.g. \cite{BE, BEP, ON, Schmutzer}). It is well-known that the
Robertson-Walker spacetimes are conformally flat quasi-Einstein manifolds.
More generally, one also considers warped products $\overline{M} \times _F 
\widetilde{N}$ of $(\overline{M}, \bar{g})$, $\dim \, \overline{M} = 1$, $%
\bar{g}_{11} = - 1$, with a warping function $F$ and $(n-1)$-dimensional
Riemannian manifold $(\widetilde{N}, \tilde{g})$, $n \geqslant 4$. Such
warped products are called \textsl{generalized Robertson-Walker spacetimes} (%
\cite{ARS}, \cite{EJK}, \cite{San}). Curvature conditions of pseudosymmetry
type on such spacetimes have been considered among others in \cite{CDGP, DD,
DDHKS, 42, DeKuch, DeSche1, 33, HV02, Kow2}.

Let now $(\overline{M},\bar{g})$ and $(\widetilde{N},\tilde{g})$, $\dim \, 
\overline{M} = p$, $\dim \, \widetilde{N} = n-p$, $1 \leqslant p < n$, be
semi-Riemannian manifolds. Let $F: {\overline{M}} \rightarrow {\mathbb{R}}%
^{+}$ be a positive smooth function on $\overline{M}$. The warped product
manifold, in short warped product, $\overline{M} \times _F \widetilde{N}$ of 
$(\overline{M},\bar{g})$ and $(\widetilde{N}, \tilde{g})$ is the product
manifold $\overline{M} \times \widetilde{N}$ with the metric $g = \bar{g}
\times _F \tilde{g} $ defined by $\bar{g} \times _F \tilde{g} = {\pi}_1^{*} 
\bar{g} + (F \circ {\pi}_1)\, {\pi}_2^{*} \tilde{g}$, where ${\pi}_1 : 
\overline{M} \times \widetilde{N} \longrightarrow \overline{M}$ and ${\pi}_2
: \overline{M} \times \widetilde{N} \longrightarrow \widetilde{N}$ are the
natural projections on $\overline{M}$ and $\widetilde{N}$, respectively. In
this paper we consider warped products $\overline{M} \times _F \widetilde{N}$
with 1-dimensional base manifold $(\overline{M},\bar{g})$ and an $(n-1)$%
-dimensional fibre $(\widetilde{N}, \tilde{g})$, $n \geqslant 4$.

Let $\{ {\overline{U}} \times {\widetilde{V}} ; x^{1},x^2 = y^{1}, \ldots ,
x^{n} = y^{n-1} \} $ be a product chart for $\overline{M} \times \widetilde{N%
}$, where $\{ {\overline{U}};x^1 \} $ and $\{ {\widetilde{V}};y^{\alpha } \}$
are systems of charts on $(\overline{M},\bar{g})$ and $(\widetilde{N}, 
\tilde{g})$, respectively. The local components of the metric $g = \bar{g}
\times _F \tilde{g}$ with respect to this chart are the following: $g_{11} = 
\bar{g}_{11}=\varepsilon=\pm 1$, $g_{hk} = F\, \tilde{g}_{\alpha \beta }$ if 
$h = \alpha $ and $k = \beta $, and $g_{hk} = 0$ otherwise, $\alpha , \beta
, \gamma , \dots \in \{ 2, \ldots ,n \} $ and $h,i,j,k \ldots \in \{ 1,2,
\ldots ,n \} $. We will denote by bars (resp., by tildes) tensors formed
from $\bar{g}$ (resp., $\tilde{g}$). It is known that the local components $%
\Gamma ^{h} _{ij}$ of the Levi-Civita connection $\nabla $ of $\overline{M}
\times _F \widetilde{N}$ are the following: (see e.g. \cite{K 2, DeKuch}) 
\begin{eqnarray}  \label{VV1}
& &\Gamma ^{1} _{11}\ = 0\, , \  \  \  \  \Gamma ^{\alpha } _{\beta \gamma } \
=\  \widetilde{\Gamma } ^{\alpha } _{\beta \gamma }\, , \  \  \  \  \Gamma ^{1}
_{\alpha \beta } \ =\ - \frac{\varepsilon}{2}\, F^{\prime }\tilde{g}
_{\alpha \beta }\, , \\
& & \Gamma ^{\alpha } _{a \beta } \ =\  \frac{1}{2F} F^{\prime \alpha }
_{\beta }\, ,\  \  \  \  \Gamma ^{1} _{\alpha 1}\ =\  \Gamma ^{\alpha } _{11}\ =\
0\, ,\  \  \  \ F^{\prime }\ =\  \partial _1 F \ =\  \frac{\partial F}{\partial
x^{1}}\, .  \nonumber
\end{eqnarray}
The local components $R_{hijk}$ of the curvature tensor $R$ and the local
components $S_{hk}$ of the Ricci tensor $S$ of $\overline{M} \times _F 
\widetilde{N}$ which may not vanish identically are the following (see e.g. 
\cite{RRD 12, DeKuch}): 
\begin{equation}
R_{\alpha 11 \beta}=- \frac{1}{2}\, T_{11}\, \tilde{g}_{\alpha \beta} = - 
\frac{\mbox{{\rm tr}}\; T}{2}\, g_{11} \tilde{g}_{\alpha \beta}\,,\  \
R_{\alpha \beta \gamma \delta}=F ( \widetilde{R}_{\alpha \beta \gamma
\delta} - \frac{\Delta_1 F}{4 F}\, \widetilde{G}_{\alpha \beta \gamma
\delta})\,,  \label{W1}
\end{equation}
\begin{equation}
S_{11} = - \frac{n-1}{2\, F}\, T_{11}\,,\  \ S_{\alpha \beta }=\widetilde{S}%
_{\alpha \beta } - \bigl( \frac{\mbox{{\rm tr}}\; T}{2} + (n-2)\, \frac{
\Delta _1 F }{4\, F} \bigr)\, \tilde{g}_{\alpha \beta }\, ,  \label{W2}
\end{equation}
\begin{eqnarray}
& & T_{11}= F^{\prime \prime }- \frac{(F^{\prime 2}}{2\, F}\,, \  \ %
\mbox{{\rm tr}}\; T =\bar g^{11}T_{11} = \varepsilon \bigl(F^{\prime \prime
}- \frac{(F^{\prime 2}}{2\, F} \bigr)\,,  \nonumber \\
& & {\Delta}_1 F = {\Delta}_{1 \bar{g}} F = \bar{g}^{11}(F^{\prime 2 }=
\varepsilon (F^{\prime 2 }\,.  \label{W3}
\end{eqnarray}
The scalar curvature $\kappa $ of $\overline{M} \times _F \widetilde{N}$
satisfies the following relation 
\begin{equation}
\kappa = \frac 1F\Bigl( \tilde{\kappa} - (n - 1)\bigl( \mbox{{\rm tr}}\; T
+(n-2)\, \frac{ \Delta _1 F }{4\, F} \bigr) \Bigr) .  \label{W4}
\end{equation}

Using (\ref{qat}), (\ref{W1}) and (\ref{W2}) we can check that the local
components $Q(g,R)_{hijklm}$ and $Q(S,R)_{hijklm}$ of the tensors $Q(g,R)$
and $Q(S,R)$ which may not vanish identically are the following: 
\begin{eqnarray}  \label{qgr5}
Q(g,R)_{1\beta \gamma \delta 1 \mu } &=& F g_{11}\bigl(\widetilde
R_{\mu \beta \gamma \delta} +(\frac{ \mbox{{\rm tr}}\; T}{2}-\frac{\Delta_1F}{4F%
})\widetilde G_{\mu \beta \gamma \delta} \bigr)\,, \\
Q(g,R)_{\alpha \beta \gamma \delta \lambda \mu} &=& F^2\,Q(\tilde g,\widetilde
R)_{\alpha \beta \gamma \delta \lambda \mu}\,, \\
Q(S,R)_{1\beta \gamma \delta 1 \mu} &=& -\frac{\mbox{{\rm tr}}\; T }{2}
g_{11} \bigl((n-1) \widetilde R_{\mu \beta \gamma \delta} - \tilde
g_{\beta \gamma}\widetilde S_{\delta \mu} + \tilde g_{\beta \delta}\widetilde
S_{\gamma \mu} \\
&+& (\frac{\mbox{{\rm tr}}\; T }{2}-\frac{\Delta_1F}{4F})\widetilde
G_{\mu \beta \gamma \delta}\bigr)\,,  \nonumber \\
Q(S,R)_{1\beta 1\delta \lambda \mu} &=&-\frac{ \mbox{{\rm tr}}\; T }{2}
g_{11}\,Q(\tilde g,\widetilde S)_{\beta \delta \lambda \mu}\,, \\
Q(S,R)_{\alpha \beta \gamma \delta \lambda \mu} &=& F\,Q(\widetilde S,\widetilde
R)_{\alpha \beta \gamma \delta \lambda \mu} -\frac{\Delta_1F}{4}\,Q(\widetilde
S,\widetilde G)_{\alpha \beta \gamma \delta \lambda \mu} \\
&-&F\bigl( \frac{ \mbox{{\rm tr}}\; T }{2}+\frac{(n-2)\Delta_1F}{4F}\bigr)%
Q(\tilde g,\widetilde R)_{\alpha \beta \gamma \delta \lambda \mu}\,.  \nonumber
\end{eqnarray}

Let $V$ be the (0,4)-tensor with the local components $V_{hijk} =
g^{lm}S_{hl}R_{mijk}=S_h^{\ l}R_{lijk}$. Using (\ref{W1}) and (\ref{W2}) we
can verify that the only nonzero components of this tensor are the
following: 
\begin{eqnarray}  \label{V3}
V_{1\beta \gamma 1}&=&\frac{n-1}{4F}\, (\mbox{{\rm tr}}\; T )\, T_{11}\tilde
g_{\beta \gamma}=\frac{n-1}{4F}\, ({\mbox{{\rm tr}}\; T})^2 \, g_{11}\tilde
g_{\beta \gamma}\,, \\
V_{\alpha 11\delta}&=&-\frac{ \mbox{{\rm tr}}\; T }{2F}\,g_{11}\Bigl(%
\widetilde S_{\alpha \delta}-\bigl(\frac{ \mbox{{\rm tr}}\; T }{2}+\frac{%
(n-2)\Delta_1F}{4F}\bigr) \tilde g_{\alpha \delta}\Bigr)\,, \\
V_{\alpha \beta \gamma \delta}&=&\widetilde S_{\alpha}^{\  \epsilon}\widetilde
R_{\epsilon \beta \gamma \delta}- \bigl(\frac{ \mbox{{\rm tr}}\; T }{2}+\frac{%
(n-2)\Delta_1F}{4F}\bigr)\widetilde R_{\alpha \beta \gamma \delta}-\frac{%
\Delta_1F}{4F}(\tilde g_{\beta \gamma}\widetilde S_{\alpha \delta}-\tilde
g_{\beta \delta}\widetilde S_{\alpha \gamma}) \\
&+&\bigl(\frac{ \mbox{{\rm tr}}\; T }{2}+\frac{(n- 2)\Delta_1F}{4F}\bigr)%
\frac{\Delta_1F}{4F}\, \widetilde G_{\alpha \beta \gamma \delta}\,.  \nonumber
\end{eqnarray}
The last equality yields 
\begin{eqnarray}  \label{VRS}
V_{\alpha \beta \gamma \delta}+V_{\beta \alpha \gamma \delta}&=&\widetilde
S_{\alpha}^{\  \epsilon}\widetilde R_{\epsilon \beta \gamma \delta}+\widetilde
S_{\beta}^{\  \epsilon}\widetilde R_{\epsilon \alpha \gamma \delta}-\frac{%
\Delta_1F}{4F} (\tilde g_{\beta \gamma}\widetilde S_{\alpha \delta}-\tilde
g_{\beta \delta}\widetilde S_{\alpha \gamma}+ \tilde
g_{\alpha \gamma}\widetilde S_{\beta \delta}-\tilde g_{\alpha \delta}\widetilde
S_{\beta \gamma}) \\
&=&(\widetilde R\cdot \widetilde S)_{\alpha \beta \gamma \delta}- \frac{\Delta_1F%
}{4F}\,Q(\tilde g,\widetilde S)_{\alpha \beta \gamma \delta}\,.  \nonumber
\end{eqnarray}

Let $P$ be a (0,6)-tensor with local components 
\begin{eqnarray*}
P_{hijklm}&=&g_{hl}V_{mijk}-g_{hm}V_{lijk}- g_{il}V_{mhjk}+g_{im}V_{lhjk}
+g_{jl}V_{mkhi}-g_{jm}V_{lkhi} \\
&-&g_{kl}V_{mjhi}+g_{km}V_{ljhi}-
g_{ij}(V_{hklm}+V_{khlm})-g_{hk}(V_{ijlm}+V_{jilm}) \\
&+&g_{ik}(V_{hjlm}+V_{jhlm})+g_{hj}(V_{iklm}+V_{kilm})\,.
\end{eqnarray*}
The local components of $P$ which may not vanish identically are the
following: 
\begin{eqnarray}  \label{P2}
P_{1\beta 1\delta \lambda \mu}&=&g_{11}\bigl((\widetilde R\cdot \widetilde
S)_{\beta \delta \lambda \mu}+(\frac{ \mbox{{\rm tr}}\; T }{2} - \frac{\Delta_1F%
}{4F})Q(\tilde g,\widetilde S)_{\beta \delta \lambda \mu}\bigr)\,, \\
P_{1\beta \gamma \delta 1\mu}&=&g_{11}\Bigl(\widetilde S_{\mu}^{\
\epsilon}\widetilde R_{\epsilon \beta \gamma \delta}- \bigl(\frac{ \mbox{{\rm
tr}}\; T }{2}+\frac{(n-2)\Delta_1F}{4F}\bigr)\widetilde
R_{\mu \beta \gamma \delta} \\
&+&(\frac{ \mbox{{\rm tr}}\; T }{2}-\frac{\Delta_1F}{4F}) (\tilde
g_{\beta \gamma}\widetilde S_{\delta \mu}-\tilde g_{\beta \delta}\widetilde
S_{\gamma \mu})  \nonumber \\
&+&\bigl((n-2)(\frac{\Delta_1F}{4F})^2-\frac{ (\mbox{{\rm tr}}\; T)^{2}}{4} -%
\frac{(n-3) \mbox{{\rm tr}}\; T }{2}\, \frac{\Delta_1F}{4F}\bigr)\widetilde
G_{\mu \beta \gamma \delta}\Bigr)\,,  \nonumber
\end{eqnarray}
\begin{eqnarray}  \label{P3}
P_{\alpha \beta \gamma \delta \lambda \mu}&=&F\bigl(\tilde
g_{\alpha \lambda}V_{\mu \beta \gamma \delta}- \tilde
g_{\alpha \mu}V_{\lambda \beta \gamma \delta}-\tilde
g_{\beta \lambda}V_{\mu \alpha \gamma \delta} +\tilde
g_{\beta \mu}V_{\lambda \alpha \gamma \delta}+\tilde
g_{\gamma \lambda}V_{\mu \delta \alpha \beta} \\
&-&\tilde g_{\gamma \mu}V_{\lambda \delta \alpha \beta}-\tilde
g_{\delta \lambda}V_{\mu \gamma \alpha \beta} +\tilde
g_{\delta \mu}V_{\lambda \gamma \alpha \beta}-\tilde
g_{\beta \gamma}(V_{\alpha \delta \lambda \mu}+V_{\delta \alpha \lambda \mu}) 
\nonumber \\
&-&\tilde
g_{\alpha \delta}(V_{\beta \gamma \lambda \mu}+V_{\gamma \beta \lambda \mu})
+\tilde
g_{\beta \delta}(V_{\alpha \gamma \lambda \mu}+V_{\gamma \alpha \lambda \mu})
+\tilde g_{\alpha \gamma}(V_{\beta \delta \lambda \mu}+V_{\delta \beta \lambda \mu})%
\bigr)\,.  \nonumber
\end{eqnarray}

\section{Warped products with non-Einsteinian fibre}

Since we investigate non-Einstein and non-conformally flat manifolds
satisfying (\ref{a1}), we restrict our considerations to the set $\mathcal{U 
}= U \cap U_{S} \cap U_{C}$.

We assume that the warped product $\overline{M} \times _F \widetilde{N}$
satisfies (\ref{a1}) and the fiber $(\widetilde{N}, \tilde{g})$ is not
Einsteinian. Now we shall use the following identity which holds on any
semi-Riemannian manifold (\cite{DGHV}, Section 4) 
\begin{eqnarray}  \label{ge}
(n-2)(R \cdot C-C \cdot R)_{hijklm} &=& Q(S,R)_{hijklm}- \frac{\kappa}{n-1}%
\, Q(g,R)_{hijklm} + P_{hijklm} \,.
\end{eqnarray}
Thus, in view of (\ref{ge}) and the definition of the tensor $P$, condition (%
\ref{a1}) can be written in the form 
\begin{equation}
\bigl((n-2)L-1\bigr)\,Q(S,R)_{hijklm} = P_{hijklm} - \frac{\kappa}{n-1}%
\,Q(g,R)_{hijklm} \,.  \label{a2}
\end{equation}
For $h=1,\,i=\beta,\,j=1,\,k=\delta,\,l=\lambda,\,m=\mu$, in view of (\ref%
{qgr1})--(\ref{qgr5}), (\ref{a2}) yields 
\begin{equation}
\bigl((n-2)L-1\bigr)\,Q(S,R)_{1\beta 1\delta \lambda \mu}=P_{1\beta
1\delta \lambda \mu}\,.  \label{a3}
\end{equation}
Substituting (\ref{qgr4}) and (\ref{P1}) into (\ref{a3}) we obtain 
\begin{equation}
(\widetilde R\cdot \widetilde S)_{\beta \delta \lambda \mu}=\bigl(\frac{\Delta_1F%
}{4F}-\frac{(n- 2)L}{2}\, \mbox{{\rm tr}}\; T \bigr)\, Q(\tilde g,\widetilde
S)_{\beta \delta \lambda \mu}\,.  \label{RSL}
\end{equation}
On the other hand (\ref{a1}) implies 
\begin{eqnarray*}
& & (R \cdot C - C \cdot R)(X_{1},X_{2},X_{3},X_{4};X,Y) + (R \cdot C - C
\cdot R)(X,Y,X_{1},X_{2};X_{3},X_{4}) \\
&+& (R \cdot C - C \cdot R)(X_{3},X_{4},X,Y;X_{1},X_{2}) \ =\ 0\,,
\end{eqnarray*}
which in virtue of Proposition 4.1 of \cite{DGHV} is equivalent to 
\begin{eqnarray*}
& & (R \cdot C)(X_{1},X_{2},X_{3},X_{4};X,Y) + (R \cdot
C)(X,Y,X_{1},X_{2};X_{3},X_{4}) \\
&+& (R \cdot C)(X_{3},X_{4},X,Y;X_{1},X_{2}) \ =\ 0\,.
\end{eqnarray*}
Further, we have (\cite{CDGP}, section 3, eq. (3.19)) 
\begin{equation}
(\widetilde R\cdot \widetilde S)_{\beta \delta \lambda \mu}=(\frac{\Delta_1F}{4F}%
-\frac{ \mbox{{\rm tr}}\; T }{2})\, Q(\tilde g,\widetilde
S)_{\beta \delta \lambda \mu}\,.  \label{RS}
\end{equation}
Since $(\widetilde N,\tilde g),\  \dim \widetilde N\geqslant 3$, is not
Einsteinian, the tensor $Q(\tilde g,\widetilde S)$ is a non-zero tensor. Let 
$Q(\tilde g,\widetilde S)\neq 0$ at $x\in \mathcal{U}$. Thus on a coordinate
neighbourhood $V\subset \mathcal{U}$ of $x$, in virtue of (\ref{RSL}) and (%
\ref{RS}), we get 
\[
(L-\frac{1}{n-2})\, \mbox{{\rm tr}}\; T =0\,.
\]
We assert that $\mbox{{\rm tr}}\; T = 0$. Supposing that $\mbox{{\rm tr}}\;
T \neq 0$ at $y\in V$ we have $L=\frac{1}{n-2}$ on some neighbourhood $%
U_1\subset V$ of $y$. Therefore (\ref{a2}) reduces on $U_1$ to 
\begin{equation}
P=\frac{\kappa}{n-1}\,Q(g,R)\,.  \label{N3}
\end{equation}
Evidently, on $U_1$ we also have 
\begin{equation}
\frac{\Delta_1F}{4F}-\frac{ \mbox{{\rm tr}}\; T }{2}=const.  \label{C1}
\end{equation}
Now (\ref{N3}) gives 
\begin{equation}
P_{1\beta \gamma \delta 1\mu}=\frac{\kappa}{n- 1}\,Q(g,R)_{1\beta \gamma \delta
1\mu}\,.  \label{a4}
\end{equation}
Substituting into this equality (\ref{W4}), (\ref{qgr3}) and (\ref{P2}) we
obtain 
\begin{eqnarray}  \label{SR1}
\widetilde S_{\mu}^{\  \epsilon}\widetilde R_{\epsilon \beta \gamma \delta}&=&(%
\frac{\tilde \kappa}{n-1} -\frac{ \mbox{{\rm tr}}\; T }{2})\, \widetilde
R_{\mu \beta \gamma \delta}+(\frac{\Delta_1F}{4F}- \frac{ \mbox{{\rm tr}}\; T }{%
2})(\tilde g_{\beta \gamma}\widetilde S _{\mu \delta}-\tilde
g_{\beta \delta}\widetilde S_{\mu \gamma}) \\
&-&(\frac{\tilde \kappa}{n-1}-\frac{ \mbox{{\rm tr}}\; T }{2})(\frac{\Delta_1F%
}{4F} -\frac{ \mbox{{\rm tr}}\; T}{2} )\widetilde G_{\mu \beta \gamma \delta}\,.
\nonumber
\end{eqnarray}
Using now (\ref{C1}) and (\ref{SR1}) we see that $\mbox{{\rm tr}}\; T =
const.$ and consequently also $\frac{\Delta_1F}{4F}=const.$ on $U_1$.
Whence, after standard calculations, we deduce that $F$ must be of the form 
\begin{equation}
F(x^1)=(ax^1+b)^2\,,\  \ a,b\in \mathbb{R}\,.  \label{F}
\end{equation}
For such $F$ we have $\mbox{{\rm tr}}\; T = 0$ on $U_1$, a contradiction.
Therefore 
\begin{equation}
\mbox{{\rm tr}}\; T = 0  \label{N4}
\end{equation}
on $V$. Thus (\ref{C1}) reduces on $V$ to 
\begin{equation}
\frac{\Delta_1F}{4F}=const.=c_1\,.  \label{N5}
\end{equation}
Note that (\ref{N4}) and (\ref{N5}), in the same manner as above, imply (\ref%
{F}) and we have 
\begin{equation}
\mbox{{\rm tr}}\; T = 0\,,\  \  \frac{\Delta_1F}{4F}=c_1=\varepsilon a^2\,.
\label{T}
\end{equation}
We prove now that 
\begin{equation}
L=\frac{1}{n-2}  \label{L}
\end{equation}
on $V$. Applying (\ref{W4}), (\ref{qgr1}), (\ref{qgr2}), (\ref{P2}), (\ref%
{N4}) and (\ref{N5}) to 
\[
\bigl((n-2)L-1\bigr)\,Q(S,R)_{1\beta \gamma \delta 1\mu} =
P_{1\beta \gamma \delta 1\mu} -\frac{\kappa}{n-1}\,Q(g,R)_{1\beta \gamma \delta
1\mu}
\]
we get 
\begin{equation}  \label{SR2}
\widetilde S_{\mu}^{\  \epsilon}\widetilde R_{\epsilon \beta \gamma \delta}=%
\frac{\tilde \kappa}{n-1} \, \widetilde R_{\mu \beta \gamma \delta}+\varepsilon
a^2(\tilde g_{\beta \gamma}\widetilde S _{\mu \delta}-\tilde
g_{\beta \delta}\widetilde S_{\mu \gamma}) - \frac{\varepsilon \tilde \kappa a^2%
}{n-1}\, \widetilde G_{\mu \beta \gamma \delta}\,.
\end{equation}
But on the other hand (\ref{V3}), by (\ref{N4}) and (\ref{N5}), gives 
\[
V_{\alpha \beta \gamma \delta}=\widetilde S_{\alpha}^{\  \epsilon}\widetilde
R_{\epsilon \beta \gamma \delta} -(n-2)c_1\widetilde
R_{\alpha \beta \gamma \delta}-c_1(\tilde g_{\beta \gamma}\widetilde
S_{\alpha \delta} -\tilde g_{\beta \delta}\widetilde
S_{\alpha \gamma})+(n-2)c_1^2\widetilde G_{\alpha \beta \gamma \delta}\,,
\]
which by (\ref{SR2}) turns into 
\[
V_{\alpha \beta \gamma \delta}=\bigl(\frac{\tilde \kappa}{n-1}-(n-2)c_1\bigr)%
\widetilde R_{\alpha \beta \gamma \delta} +(n-2)c_1^2\widetilde
G_{\alpha \beta \gamma \delta}\,.
\]
Substituting this into (\ref{P3}) we obtain 
\begin{equation}
P_{\alpha \beta \gamma \delta \lambda \mu}=F\bigl(\frac{\tilde \kappa}{n-1}%
-(n-2)c_1\bigr)\,Q(\tilde g,\widetilde R)
_{\alpha \beta \gamma \delta \lambda \mu}\,.  \label{L2}
\end{equation}
Now 
\[
\bigl((n-2)L-1\bigr)Q(S,R)_{\alpha \beta \gamma \delta \lambda \mu}=P_{\alpha%
\beta \gamma \delta \lambda \mu} -\frac{\kappa}{n-1}\,Q(g,R)_{\alpha \beta \gamma%
\delta \lambda \mu}\,,
\]
by making use of (\ref{qgr2}), (\ref{L2}) and 
\[
\frac{\kappa}{n-1}=\frac 1F\, \bigl(\frac{\tilde \kappa}{n-1}-(n-2)c_1\bigr)\,,
\]
turns into 
\begin{equation}  \label{L3}
\bigl((n-2)L-1\bigr)Q(S,R)_{\alpha \beta \gamma \delta \lambda \mu}=0\,.
\end{equation}
Since $V\subset \mathcal{U}$ and in virtue of (\ref{qgr3}), (\ref{qgr4}) and (%
\ref{N4}) we have 
\[
Q(S,R)_{1\beta \gamma \delta 1\mu}=Q(S,R)_{1\beta 1\delta \lambda \mu}=0\,,
\]
at least one of the local components of $Q(S,R)_{\alpha \beta \gamma \delta%
\lambda \mu}$ must be non-zero. Therefore (\ref{L3}) implies (\ref{L}). Thus
we have proved

\begin{theo}
Let $\overline M\times_F\widetilde N$ be a warped product manifold with $1$%
-dimensional base manifold $(\overline M,\bar g)$ and non-Einstein $(n-1)$%
-dimensional fiber $(\widetilde N,\tilde g)$, $n \geqslant 4$. If (\ref{a1})
is satisfied on $\overline M\times_F\widetilde N$ then on the set $\mathcal{U%
}$ we have (\ref{SR2}) and 
\begin{equation}  \label{newL3}
L=\frac{1}{n-2}\,,\  \ F(x^1)=(ax^1+b)^2\,,\ a,b\in \mathbb{R}\,.
\end{equation}
\end{theo}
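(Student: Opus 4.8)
The plan is to reduce the tensorial condition (\ref{a1}) to scalar relations on the fibre by exploiting the universal identity (\ref{ge}), which expresses $R\cdot C-C\cdot R$ through $Q(S,R)$, $Q(g,R)$ and the auxiliary tensor $P$. Substituting (\ref{a1}) into (\ref{ge}) converts the hypothesis into the single equation (\ref{a2}), namely $\bigl((n-2)L-1\bigr)Q(S,R)=P-\frac{\kappa}{n-1}Q(g,R)$, in which every tensor has explicitly known components in the warped-product chart via (\ref{W1})--(\ref{W4}), (\ref{V3}), (\ref{P2}), (\ref{P3}) and the known components of $Q(g,R)$ and $Q(S,R)$. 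The whole argument then amounts to reading off (\ref{a2}) on carefully chosen index blocks and comparing the resulting fibre identities, always working on $\mathcal{U}=U\cap U_S\cap U_C$, where $Q(S,R)\neq 0$.

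First I would examine the mixed block $(1\beta1\delta\lambda\mu)$. There $Q(g,R)$ vanishes, so (\ref{a2}) collapses to a relation between $P_{1\beta1\delta\lambda\mu}$ and $Q(S,R)_{1\beta1\delta\lambda\mu}$; inserting the explicit expressions (both built from $\widetilde R\cdot\widetilde S$ and $Q(\tilde g,\widetilde S)$) yields the first key identity (\ref{RSL}), in which $\widetilde R\cdot\widetilde S$ is a multiple of $Q(\tilde g,\widetilde S)$ with a coefficient depending on $L$ and $\mbox{tr}\,T$. Independently, (\ref{a1}) forces the natural cyclic sum of $R\cdot C-C\cdot R$ to vanish—the Tachibana tensor $Q(S,R)$ satisfies this cyclic identity—so by Proposition 4.1 of \cite{DGHV} the tensor $R\cdot C$ is itself cyclic, which reproduces the relation (\ref{RS}) of \cite{CDGP}, now with a coefficient \emph{free} of $L$. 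Since $(\widetilde N,\tilde g)$ is non-Einstein we have $Q(\tilde g,\widetilde S)\neq 0$, so matching the coefficients of (\ref{RSL}) and (\ref{RS}) gives $\bigl(L-\frac{1}{n-2}\bigr)\mbox{tr}\,T=0$ on $\mathcal{U}$.

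The crux—and the step I expect to be the main obstacle—is upgrading this dichotomy to $\mbox{tr}\,T=0$ everywhere. I would argue by contradiction: if $\mbox{tr}\,T\neq0$ at a point, then $L=\frac{1}{n-2}$ on a neighbourhood, (\ref{a2}) reduces to $P=\frac{\kappa}{n-1}Q(g,R)$, and the $(1\beta\gamma\delta1\mu)$ block then gives (\ref{SR1}). Feeding (\ref{SR1}) through the constancy relation (\ref{C1}) and comparing coefficients of the purely fibre-built tensors (which are independent of $x^1$) forces both $\mbox{tr}\,T$ and $\frac{\Delta_1F}{4F}$ to be constant; integrating this pins the warping function to the form (\ref{F}), $F=(ax^1+b)^2$. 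But precisely for such $F$ one computes $\mbox{tr}\,T\equiv0$, the desired contradiction. Hence $\mbox{tr}\,T=0$, whereupon $\frac{\Delta_1F}{4F}$ is constant, $F$ is again of the form (\ref{F}) with $c_1=\varepsilon a^2$, and the $(1\beta\gamma\delta1\mu)$ block now yields (\ref{SR2}).

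To finish and obtain $L=\frac{1}{n-2}$, I would reduce $V$ via (\ref{V3}) using $\mbox{tr}\,T=0$ and (\ref{SR2}), substitute into (\ref{P3}) to compute $P_{\alpha\beta\gamma\delta\lambda\mu}$, and then insert this, together with the simplified scalar-curvature relation, into the pure-fibre block of (\ref{a2}); after the $Q(g,R)$ term cancels, this collapses to $\bigl((n-2)L-1\bigr)Q(S,R)_{\alpha\beta\gamma\delta\lambda\mu}=0$. Because $\mbox{tr}\,T=0$ kills every mixed component of $Q(S,R)$ while $Q(S,R)\neq0$ on $\mathcal{U}$, some pure-fibre component must be non-zero, forcing $L=\frac{1}{n-2}$ and completing (\ref{newL3}).
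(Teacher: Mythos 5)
Your proposal is correct and follows essentially the same route as the paper's own proof: the identity (\ref{ge}) reducing (\ref{a1}) to (\ref{a2}), the comparison of (\ref{RSL}) with the $L$-free relation (\ref{RS}) obtained via the cyclic identity and Proposition 4.1 of \cite{DGHV} to get $\bigl(L-\frac{1}{n-2}\bigr)\mbox{tr}\,T=0$, the contradiction argument showing $\mbox{tr}\,T=0$ (constancy from (\ref{C1}) and (\ref{SR1}), hence $F=(ax^1+b)^2$, hence $\mbox{tr}\,T\equiv 0$), and finally the pure-fibre block collapsing to $\bigl((n-2)L-1\bigr)Q(S,R)_{\alpha\beta\gamma\delta\lambda\mu}=0$ with a non-vanishing pure-fibre component of $Q(S,R)$ forcing $L=\frac{1}{n-2}$. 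No substantive deviation or gap to report.
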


\begin{cor}
Let $\overline M\times_F\widetilde N$ be a generalized Robertson-Walker
spacetime with non-Einstein fiber $(\widetilde N,\tilde g)$, $n \geqslant 4$%
. If (\ref{a1}) is satisfied on $\overline M \times_F\widetilde N$ then (\ref%
{SR2}) and (\ref{newL3}) hold on $\mathcal{U}$.
\end{cor}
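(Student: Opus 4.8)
The plan is to recognize that this corollary is nothing more than the specialization of Theorem 4.1 to the Lorentzian case, so that no argument beyond invoking that theorem is required. First I would recall from Section 3 that a generalized Robertson-Walker spacetime is, by definition, a warped product $\overline{M} \times_F \widetilde{N}$ whose base $(\overline{M}, \bar{g})$ is $1$-dimensional with $\bar{g}_{11} = -1$ (that is, $\varepsilon = -1$) and whose fibre $(\widetilde{N}, \tilde{g})$ is an $(n-1)$-dimensional Riemannian manifold, $n \geqslant 4$. Thus such a spacetime is exactly a warped product of the type treated in Theorem 4.1, the only restrictions being the choice of signature $\varepsilon = -1$ and the positive definiteness of $\tilde{g}$.

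Next I would check that the hypotheses of Theorem 4.1 hold verbatim. By assumption the fibre is non-Einstein and the condition (\ref{a1}) is satisfied on the spacetime; these are precisely the two standing assumptions of that theorem. Since every Riemannian manifold is in particular a semi-Riemannian manifold, and since the computations of Section 3 leading to (\ref{SR2}) and (\ref{newL3}) were carried out for an arbitrary sign $\varepsilon = \pm 1$ of $\bar{g}_{11}$, the value $\varepsilon = -1$ requires no separate treatment. Consequently Theorem 4.1 applies and yields (\ref{SR2}) together with (\ref{newL3}) on the set $\mathcal{U}$, which is exactly the assertion.

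There is essentially no obstacle here, as the corollary is a direct instance of the more general theorem. The only point deserving a word of care is to confirm that the derivation of Theorem 4.1 nowhere used the Riemannian (as opposed to merely semi-Riemannian) character of the fibre nor a particular sign of $\varepsilon$. Inspecting the steps, namely the reduction (\ref{a2}), the component identities (\ref{RSL}) and (\ref{RS}), and the final implication (\ref{L3}) $\Rightarrow$ (\ref{L}), shows this to be the case, so the specialization is legitimate and the proof reduces to a single citation of Theorem 4.1.
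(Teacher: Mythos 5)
Your proposal is correct and matches the paper's treatment: the paper states this corollary immediately after Theorem 4.1 with no separate argument, precisely because a generalized Robertson-Walker spacetime is by definition a warped product with $1$-dimensional base ($\varepsilon = -1$) and Riemannian fibre, so the corollary is a direct specialization of Theorem 4.1. Your additional check that the derivation of Theorem 4.1 never uses a particular sign of $\varepsilon$ or the Riemannian character of the fibre is exactly the right point to verify.
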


\begin{prop}
Under assumptions of Theorem 4.1 the fiber manifold $(\widetilde N,\tilde g)$
is a Ricci-pseudosymmetric manifold of constant type (see e.g. \cite{G6}),
precisely 
\begin{equation}
\widetilde R\cdot \widetilde S=\varepsilon a^2\,Q(\tilde g,\widetilde S)\,.
\label{D1}
\end{equation}
Moreover, if $n \geqslant 5$ then the difference tensor $\widetilde
R\cdot \widetilde C-\widetilde C\cdot \widetilde R$ of the fiber is expressed
by the Tachibana tensors $Q(\widetilde S,\widetilde R)$ and $Q(\tilde
g,\widetilde R)$, precisely we have 
\begin{equation}
(n-3) (\widetilde R\cdot \widetilde C-\widetilde C\cdot \widetilde R) =
Q(\widetilde S,\widetilde R) -\frac{\tilde \kappa}{(n-1)(n-2)}\,Q(\tilde
g,\widetilde R)\,.  \label{D3}
\end{equation}
\end{prop}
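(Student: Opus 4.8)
The plan is to treat the two assertions separately. For (\ref{D1}) I would simply specialize the already-established relation (\ref{RS}) to the present situation. By Theorem 4.1 we know that on $\mathcal{U}$ the conditions (\ref{T}) hold, namely $\mbox{tr}\,T = 0$ and $\frac{\Delta_1 F}{4F} = \varepsilon a^2$. Substituting these two values into (\ref{RS}) collapses its coefficient $\frac{\Delta_1 F}{4F} - \frac{\mbox{tr}\,T}{2}$ to $\varepsilon a^2$, which gives exactly $\widetilde{R}\cdot\widetilde{S} = \varepsilon a^2\, Q(\tilde g, \widetilde{S})$. Since $\varepsilon a^2$ is constant, this shows that $(\widetilde{N}, \tilde{g})$ is Ricci-pseudosymmetric of constant type, proving (\ref{D1}).

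For (\ref{D3}) the key observation is that the identity (\ref{ge}) holds on every semi-Riemannian manifold, hence in particular on the fibre $(\widetilde{N}, \tilde{g})$, which is a manifold of dimension $n-1$. Writing (\ref{ge}) for the fibre, i.e.\ replacing $n, R, C, S, \kappa, g, P$ by $n-1, \widetilde{R}, \widetilde{C}, \widetilde{S}, \tilde{\kappa}, \tilde g, \widetilde{P}$, gives
\[
(n-3)(\widetilde{R}\cdot\widetilde{C} - \widetilde{C}\cdot\widetilde{R}) = Q(\widetilde{S}, \widetilde{R}) - \frac{\tilde{\kappa}}{n-2}\,Q(\tilde g, \widetilde{R}) + \widetilde{P},
\]
where $\widetilde{P}$ is the fibre's analogue of the tensor $P$, built from $\widetilde{V}_{\alpha\beta\gamma\delta} = \widetilde{S}_\alpha^{\ \epsilon}\widetilde{R}_{\epsilon\beta\gamma\delta}$ through the same defining formula. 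Comparing this with the desired conclusion (\ref{D3}), the whole statement reduces to the single identity $\widetilde{P} = \frac{\tilde{\kappa}}{n-1}\,Q(\tilde g, \widetilde{R})$.

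To establish this identity I would substitute the structural relation (\ref{SR2}) into $\widetilde{V}$ and then into the defining formula of $\widetilde{P}$. That formula involves two kinds of terms: eight terms carrying a single factor $\widetilde{V}_{\mu\beta\gamma\delta}$, into which I insert (\ref{SR2}) directly, and four terms carrying the symmetric combination $\widetilde{V}_{\alpha\beta\lambda\mu} + \widetilde{V}_{\beta\alpha\lambda\mu}$, for which I use the symmetrization identity $\widetilde{V}_{\alpha\beta\lambda\mu} + \widetilde{V}_{\beta\alpha\lambda\mu} = (\widetilde{R}\cdot\widetilde{S})_{\alpha\beta\lambda\mu}$ (read off from (\ref{VRS})) together with (\ref{D1}) to replace it by $\varepsilon a^2\, Q(\tilde g, \widetilde{S})_{\alpha\beta\lambda\mu}$. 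After these substitutions every summand is a contraction of $\tilde g$ with one of $\widetilde{R}$, $\widetilde{S}$ or $\widetilde{G}$; collecting them and using the skew-symmetries of $\widetilde{R}$ and $\widetilde{G}$ together with $\tilde g^{\alpha\beta}\widetilde{S}_{\alpha\beta} = \tilde{\kappa}$, I expect the contributions proportional to $\widetilde{S}$ and to $\widetilde{G}$ to cancel, leaving precisely $\frac{\tilde{\kappa}}{n-1}\,Q(\tilde g, \widetilde{R})$. Feeding this back into the fibre's version of (\ref{ge}) and simplifying $\frac{\tilde{\kappa}}{n-2} - \frac{\tilde{\kappa}}{n-1} = \frac{\tilde{\kappa}}{(n-1)(n-2)}$ then yields (\ref{D3}). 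The hypothesis $n \geqslant 5$ enters exactly here, since (\ref{ge}) and the Weyl tensor $\widetilde{C}$ require the fibre dimension $n-1$ to be at least $4$.

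The main obstacle is the computation of $\widetilde{P}$: it is a lengthy but mechanical bookkeeping of the twelve terms in the $P$-formula, entirely parallel to the calculation that produced (\ref{L2}) in the proof of Theorem 4.1, so the only genuine care needed is in tracking the cancellations among the $\widetilde{S}$- and $\widetilde{G}$-terms.
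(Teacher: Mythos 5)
Your proposal is correct and is essentially the paper's own proof: the paper likewise obtains (\ref{D1}) immediately from (\ref{SR2}) (by symmetrization, which is equivalent to your specialization of (\ref{RS}) via (\ref{T})), and it likewise proves (\ref{D3}) by applying (\ref{ge}) to the fibre and reducing everything to the single identity $\widetilde P=\frac{\tilde \kappa}{n-1}\,Q(\tilde g,\widetilde R)$. The cancellation of the $\widetilde S$- and $\widetilde G$-contributions that you anticipate does occur; the paper carries it out compactly by writing those contributions as $-\varepsilon a^{2}\,Q(\widetilde S,\widetilde G)-\varepsilon a^{2}\,\tilde g\wedge Q(\tilde g,\widetilde S)$ and invoking the identity $\tilde g\wedge Q(\tilde g,\widetilde S)=-Q(\widetilde S,\widetilde G)$ (eq. (28) of \cite{DGHV}).
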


\begin{proof}
First we observe that (\ref{SR2}) implies (\ref{D1}). Applying the identity (%
\ref{ge}) to $(\widetilde N,\tilde g)$ we have 
\begin{equation}
(n-3)(\widetilde R\cdot \widetilde C-\widetilde C\cdot \widetilde
R)=Q(\widetilde S,\widetilde R)+\widetilde P- \frac{\tilde \kappa}{n-2}%
\,Q(\tilde g,\widetilde R)\,.  \label{D4}
\end{equation}
Using now (\ref{SR2}) we get 
\[
\widetilde V_{\alpha \beta \gamma \delta}+\widetilde
V_{\beta \alpha \gamma \delta}=\varepsilon a^2Q(\tilde g, \widetilde
S)_{\alpha \beta \gamma \delta}
\]
and 
\[
\widetilde P=\frac{\tilde \kappa}{n-1}\,Q(\tilde g,\widetilde R)-\varepsilon
a^2Q(\widetilde S,\widetilde G) -\varepsilon a^2\, \tilde g\wedge Q(\tilde
g,\widetilde S)\,,
\]
which by making use of $\tilde g\wedge Q(\tilde g,\widetilde
S)=-Q(\widetilde S,\widetilde G)$ (see (28) of \cite{DGHV}) reduces to $%
\widetilde P=\frac{\tilde \kappa}{n-1}\,Q(\tilde g,\widetilde R)$.
Substituting this equality into (\ref{D4}) we obtain (\ref{D3}).
\end{proof}

We have also the converse statement to Theorem 4.1.

\begin{theo}
Let $(\overline M,\bar g)$, $\bar g_{11}=\varepsilon$, be a $1$-dimensional
manifold and let $(\widetilde N,\tilde g)$ be an $(n-1)$-dimensional
non-Einstein manifold, $n \geqslant 4$, satisfying (\ref{SR2}). If $%
F(x^1)=(ax^1+b)^2$, then the warped product $\overline M\times_F\widetilde N$
fulfills (\ref{a1}) with $L=\frac{1}{n-2}$.
\end{theo}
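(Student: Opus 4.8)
The plan is to verify the required identity directly at the level of local components, in effect running the computation behind Theorem 4.1 in reverse. First I would extract the algebraic consequences of the hypothesis $F(x^{1})=(ax^{1}+b)^{2}$. Differentiating gives $F'=2a(ax^{1}+b)$, $F''=2a^{2}$ and $F'^{2}=4a^{2}F$, whence, by (\ref{W3}),
\[
\mbox{{\rm tr}}\; T=0\,,\qquad \frac{\Delta_{1}F}{4F}=\varepsilon a^{2}\,,
\]
which is precisely (\ref{T}). These two relations are the simplifications that collapse the coefficients in the component formulas of Section 3.

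Next I would reduce the target. By the universal identity (\ref{ge}) condition (\ref{a1}) with $L=\frac{1}{n-2}$ is equivalent, through (\ref{a2}), to the vanishing of the factor $(n-2)L-1$, i.e. to the single tensor equation $P=\frac{\kappa}{n-1}\,Q(g,R)$ (cf. (\ref{N3})). Thus it suffices to check this equality on each family of nonvanishing components. I would also record that, by the first observation in the proof of Proposition 4.1, the fibre condition (\ref{SR2}) forces $\widetilde R\cdot\widetilde S=\varepsilon a^{2}\,Q(\tilde g,\widetilde S)$, i.e. (\ref{D1}); this is the fact that controls the mixed components.

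The verification then splits into the three component types carrying nonzero entries. For the components $P_{1\beta1\delta\lambda\mu}$ the right-hand side vanishes, since $Q(g,R)$ has no component of this type, while (\ref{P2}) together with $\mbox{{\rm tr}}\; T=0$ and $\frac{\Delta_{1}F}{4F}=\varepsilon a^{2}$ reduces $P_{1\beta1\delta\lambda\mu}$ to $g_{11}\bigl((\widetilde R\cdot\widetilde S)_{\beta\delta\lambda\mu}-\varepsilon a^{2}Q(\tilde g,\widetilde S)_{\beta\delta\lambda\mu}\bigr)$, which is zero by (\ref{D1}). For the components $P_{1\beta\gamma\delta1\mu}$ I would substitute (\ref{SR2}) into (\ref{P2}); the terms in $\widetilde S$ cancel by symmetry, leaving a combination of $\widetilde R_{\mu\beta\gamma\delta}$ and $\widetilde G_{\mu\beta\gamma\delta}$ which, after inserting the value of $\kappa$ from (\ref{W4}), matches $\frac{\kappa}{n-1}Q(g,R)_{1\beta\gamma\delta1\mu}$ read off from (\ref{qgr5}). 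Finally, for the pure fibre components $P_{\alpha\beta\gamma\delta\lambda\mu}$, substituting (\ref{SR2}) into (\ref{V3}) reduces $V_{\alpha\beta\gamma\delta}$ to a linear combination of $\widetilde R$ and $\widetilde G$; plugging this into (\ref{P3}) and using $Q(\tilde g,\widetilde G)=0$ makes the $\widetilde G$ part disappear, giving (\ref{L2}), which equals $\frac{\kappa}{n-1}Q(g,R)_{\alpha\beta\gamma\delta\lambda\mu}=\frac{\kappa}{n-1}F^{2}Q(\tilde g,\widetilde R)_{\alpha\beta\gamma\delta\lambda\mu}$ again by (\ref{W4}).

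Since these exhaust all components that need not vanish, $P=\frac{\kappa}{n-1}Q(g,R)$ holds identically on $\overline M\times_{F}\widetilde N$, and hence so does (\ref{a1}) with $L=\frac{1}{n-2}$. I expect the main obstacle to be purely bookkeeping: keeping track of the cancellations in the $\widetilde S$- and $\widetilde G$-coefficients of the $P_{1\beta\gamma\delta1\mu}$ block and confirming that the $\widetilde G$-contribution drops out of the fibre block via $Q(\tilde g,\widetilde G)=0$. Since every intermediate identity needed here is exactly one already established, in the opposite direction, in the proof of Theorem 4.1, the computations can largely be quoted rather than redone.
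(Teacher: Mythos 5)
Your proposal is correct and follows essentially the same route as the paper's proof: both reduce (\ref{a1}) with $L=\frac{1}{n-2}$ via (\ref{a2}) to the single identity $P=\frac{\kappa}{n-1}\,Q(g,R)$, and then verify it on each family of possibly non-vanishing components using (\ref{T}), (\ref{SR2}) and (\ref{D1}). The only cosmetic difference is that the paper separately records the auxiliary identity $V_{\alpha \beta \gamma \delta}+V_{\beta \alpha \gamma \delta}=0$ before substituting into (\ref{P3}), whereas in your argument this vanishing is implicit in the antisymmetry of the reduced form of $V$ as a combination of $\widetilde R$ and $\widetilde G$.
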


\begin{proof}
As we have seen (cf. (\ref{a2})) (\ref{a1}) for $L=\frac{1}{n-2}$ takes the
form 
\begin{equation}
P=\frac{\kappa}{n-1}\,Q(g,R)\,.  \label{a5}
\end{equation}
Using now (\ref{VRS}), (\ref{T}) and (\ref{D1}) we have 
\begin{equation}
V_{\alpha \beta \gamma \delta}+V_{\beta \alpha \gamma \delta}=0\,.  \label{D2}
\end{equation}
Taking into account (\ref{V3}), (\ref{T}) and (\ref{SR2}) we obtain 
\[
V_{\alpha \beta \gamma \delta}=\phi \widetilde
R_{\alpha \beta \gamma \delta}+\psi \, \widetilde G_{\alpha \beta \gamma \delta}
\]
for some $\psi$ and $\phi=\frac{\tilde \kappa}{n-1}-\varepsilon(n- 2)a^2$.
Substituting the above equality and (\ref{D2}) into (\ref{P3}) we get 
\[
P_{\alpha \beta \gamma \delta \lambda \mu}=F\phi \,Q(\tilde g,\widetilde
R)_{\alpha \beta \gamma \delta \lambda \mu}\,,
\]
which in view of (\ref{W4}) and (\ref{T}) takes the form 
\[
P_{\alpha \beta \gamma \delta \lambda \mu}=\frac{\kappa}{n- 1}\,Q(g,R)_{\alpha%
\beta \gamma \delta \lambda \mu}\,.
\]
Using (\ref{P1}), (\ref{T}) and (\ref{D1}) we have $P_{1\beta
1\delta \lambda \mu}=0$, which means that (\ref{a3}) is satisfied. Finally, in
the same manner we obtain (\ref{a4}). Thus we see that (\ref{a5}) is
satisfied for all systems of indices.
\end{proof}

\begin{cor}
The equality (\ref{SR2}) is satisfied on every Einstein manifold $%
(\widetilde N,\tilde g)$. Thus every warped product $\overline
M\times_F\widetilde N$ with $1$-dimensional base $(\overline M,\bar g)$,
Einsteinian fiber $(\widetilde N,\tilde g)$, $\dim \widetilde{N} \geqslant 4$%
, which is not a space of constant curvature, and the warping function $%
F(x^1)=(ax^1+b)^2$ satisfies (\ref{a1}) with $L=\frac{1}{n-2}$.
\end{cor}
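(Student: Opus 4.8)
The plan is to split the statement into its two assertions and treat them in turn, the crucial observation being that the converse computation of Theorem 4.2 never actually uses the non-Einstein character of the fiber.

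First I would establish that (\ref{SR2}) holds identically on any Einstein fiber. Since $\dim\widetilde N=n-1\geqslant 3$, the Einstein condition reads $\widetilde S_{\alpha\beta}=\frac{\tilde\kappa}{n-1}\,\tilde g_{\alpha\beta}$, equivalently $\widetilde S_\mu^{\ \epsilon}=\frac{\tilde\kappa}{n-1}\,\delta_\mu^{\epsilon}$. Inserting this on the left of (\ref{SR2}) gives $\widetilde S_\mu^{\ \epsilon}\widetilde R_{\epsilon\beta\gamma\delta}=\frac{\tilde\kappa}{n-1}\widetilde R_{\mu\beta\gamma\delta}$. On the right, the same substitution turns $\tilde g_{\beta\gamma}\widetilde S_{\mu\delta}-\tilde g_{\beta\delta}\widetilde S_{\mu\gamma}$ into $\frac{\tilde\kappa}{n-1}\widetilde G_{\mu\beta\gamma\delta}$, so the two terms carrying $\widetilde G_{\mu\beta\gamma\delta}$ cancel and the right side collapses to $\frac{\tilde\kappa}{n-1}\widetilde R_{\mu\beta\gamma\delta}$ as well. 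Hence (\ref{SR2}) is an identity on every Einstein manifold; this step is purely algebraic.

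For the second assertion I would reread the proof of Theorem 4.2 and verify that it rests solely on (\ref{SR2}), on the identities (\ref{T}) produced by $F(x^1)=(ax^1+b)^2$, and on (\ref{D1}). All three are now in force: (\ref{SR2}) by the previous paragraph, (\ref{T}) by the choice of $F$, and (\ref{D1}) trivially, since for an Einstein fiber $\widetilde R\cdot\widetilde S=\frac{\tilde\kappa}{n-1}\,\widetilde R\cdot\tilde g=0$ while $Q(\tilde g,\widetilde S)=\frac{\tilde\kappa}{n-1}\,Q(\tilde g,\tilde g)=0$, so both sides of (\ref{D1}) vanish. Feeding these into the component formulas (\ref{VRS}), (\ref{V3}), (\ref{P2}), (\ref{P3}) exactly as in Theorem 4.2 yields $V_{\alpha\beta\gamma\delta}+V_{\beta\alpha\gamma\delta}=0$ and $V_{\alpha\beta\gamma\delta}=\phi\,\widetilde R_{\alpha\beta\gamma\delta}+\psi\,\widetilde G_{\alpha\beta\gamma\delta}$, whence (\ref{a5}) holds for every system of indices; by the identity (\ref{ge}) this is precisely (\ref{a1}) with $L=\frac{1}{n-2}$.

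The step I expect to need the most care is not a computation but the justification that Theorem 4.2 may be used here at all: as stated it presupposes a non-Einstein fiber, so it cannot be quoted and I must instead re-run its proof to confirm that the Einstein hypothesis is nowhere invoked (it is not). I would also comment on the clause ``which is not a space of constant curvature'': its purpose is to keep the example inside the class the paper studies, namely to ensure that $\overline M\times_F\widetilde N$ is non-conformally flat --- a constant-curvature Einstein fiber would render the warped product conformally flat --- so that the set $\mathcal{U}=U\cap U_S\cap U_C$ carrying (\ref{a1}) is non-empty and the conclusion non-vacuous.
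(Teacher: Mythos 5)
Your proposal is correct and is essentially the paper's own (implicit) argument: the corollary is stated without a separate proof precisely because (\ref{SR2}) collapses to an identity under $\widetilde S = \frac{\tilde\kappa}{n-1}\tilde g$, and the computation proving Theorem 4.2 never invokes the non-Einstein hypothesis, with (\ref{D1}) holding trivially since both $\widetilde R\cdot\widetilde S$ and $Q(\tilde g,\widetilde S)$ vanish for an Einstein fiber. Your explicit check of these points, and your reading of the constant-curvature exclusion as guaranteeing non-conformal flatness (hence non-vacuousness of $\mathcal{U}$), is exactly what the paper leaves unsaid.
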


In the next section we also show that there exist warped products with
Einsteinian fiber, which is not a space of constant curvature, satisfying (%
\ref{a1}) with $L=\frac{1}{n-1}$.

\section{Warped products with Einsteinian fibre}

In this section we consider warped products $\overline M\times_F\widetilde N$%
, $\dim \overline{M} = 1$, assuming that a fibre $(\widetilde N,\tilde g)$
is an Einstein manifold, i.e. 
\begin{equation}
\widetilde S_{\alpha \beta}=\frac{\tilde \kappa}{n-1}\, \tilde
g_{\alpha \beta}\,.  \label{E}
\end{equation}
Using (\ref{W2}) we can easily show that such warped product is a
quasi-Einstein manifold. It is worth to noticing that $\widetilde R\neq 
\frac{\tilde \kappa}{(n-1)(n-2)}\widetilde G$ on $\mathcal{U}$. Using (\ref%
{qgr4}), (\ref{P1}) and (\ref{E}), we get 
\begin{equation}
Q(S,R)_{1\beta 1 \delta \lambda \mu}= P_{1\beta 1\delta \lambda \mu}=0\,.
\label{B1}
\end{equation}
Analogously, in view of (\ref{qgr3}), (\ref{P2}) and (\ref{E}), we have 
\begin{equation}  \label{B2}
Q(S,R)_{1\alpha \beta \gamma 1\delta}=\frac{ \mbox{{\rm tr}}\; T }{2}\,g_{11}\,%
\Bigl(-(n-1)\widetilde R_{\delta \alpha \beta \gamma}+ \bigl(\frac{\tilde \kappa%
}{n-1}-(\frac{ \mbox{{\rm tr}}\; T }{2}-\frac{\Delta_1F}{4F})\bigr)%
\widetilde R_{\delta \alpha \beta \gamma}\Bigr)\,,
\end{equation}
\begin{eqnarray}  \label{B3}
& & P_{1\beta \gamma \delta 1\mu} \ =\ g_{11}\, \Bigl(\eta \widetilde
R_{\mu \beta \gamma \delta} \\
&+&\bigl((\frac{ \mbox{{\rm tr}}\; T }{2}-\frac{\Delta_1F}{4F})\frac{%
\tilde \kappa}{n-1} +(n-2)(\frac{\Delta_1F}{4F})^2-\frac{ (\mbox{{\rm tr}}\;
T)^{2}}{4} -(n-3)\frac{ \mbox{{\rm tr}}\; T }{2}\, \frac{\Delta_1F}{4F}\bigr)%
\widetilde G_{\mu \beta \gamma \delta}\Bigr)\,  \nonumber
\end{eqnarray}
where 
\[
\eta = \frac{\tilde \kappa}{n-1} - \frac{ \mbox{{\rm tr}}\; T }{2} - \frac{%
(n-2)\Delta_1F}{4F} \,.
\]
Finally, making use of (\ref{V3}) and (\ref{E}), we obtain 
\[
V_{\alpha \beta \gamma \delta}=\eta \,(\widetilde R_{\alpha \beta \gamma \delta} -%
\frac{\Delta _1F}{4F}\, \widetilde G_{\alpha \beta \gamma \delta})
\]
and next, in virtue of (\ref{qgr5}) and (\ref{P3}), also 
\begin{equation}
Q(S,R)_{\alpha \beta \gamma \delta \lambda \mu}=F\eta \,Q(\tilde g,\widetilde
R)_{\alpha \beta \gamma \delta \lambda \mu}\,,  \label{B4}
\end{equation}
\begin{equation}
P_{\alpha \beta \gamma \delta \lambda \mu}=F\eta \,Q(\tilde g,\widetilde
R)_{\alpha \beta \gamma \delta \lambda \mu}\,.  \label{B5}
\end{equation}
Thus taking into account (\ref{qgr1}), (\ref{qgr2}) and (\ref{B1})--(\ref{B5}%
), we see that (\ref{a2}) is equivalent to the following two equalities: 
\begin{eqnarray}
& & \bigl((n-2)L-1\bigr)\, \frac{ \mbox{{\rm tr}}\; T }{2}\, \Bigl(%
-(n-1)\widetilde R +\bigl(\frac{\tilde \kappa}{n-1}-(\frac{ \mbox{{\rm tr}}\;
T }{2}-\frac{\Delta_1F}{4F})\bigr)\, \widetilde G\Bigr)  \nonumber \\
&=& \frac{ \mbox{{\rm tr}}\; T }{2}\, \bigl(\widetilde R+(\frac{ \mbox{{\rm
tr}}\; T}{2}-\frac{\Delta_1F}{4F})\, \widetilde G\bigr)\,,  \label{B6}
\end{eqnarray}
\begin{equation}
\bigl((n-2)L-1\bigr)\, \eta \,Q(\tilde g,\widetilde R)=\frac{ \mbox{{\rm tr}}%
\; T }{2}\,Q(\tilde g,\widetilde R)\,.  \label{B7}
\end{equation}
We consider two cases: (i) $\mbox{{\rm tr}}\; T = 0$ and (ii) $\mbox{{\rm
tr}}\; T \neq 0$.

(i) $\mbox{{\rm tr}}\; T = 0$. Since $Q(\tilde g,\widetilde R)\neq 0$ on $%
\mathcal{U}$, so (\ref{B7}) leads to 
\[
\bigl((n-2)L-1\bigr)\bigl(\frac{\tilde \kappa}{n-1}-\frac{(n-2)\Delta_1F}{4F}%
\bigr)=0\,.
\]
Supposing that $\frac{\tilde \kappa}{n-1}=\frac{(n-2)\Delta_1F}{4F}$ and
using (\ref{B1}), (\ref{B2}) and (\ref{B4}) we see that $Q(S,R)=0$, a
contradiction. Thus we get $L=\frac{1}{n-2}$. Moreover, solving the
differential equation $\mbox{{\rm tr}}\; T = 0$, one can see that the
warping function $F$ must be of the form (\ref{F}). Thus we have the
situation described in Corollary 4.1.

(ii) $\mbox{{\rm tr}}\; T \neq 0$. Now (\ref{B6}) leads to 
\[
(n-2)\bigl((n-1)L-1\bigr)\, \widetilde R=\Bigl((n-2)L\bigl(\frac{\tilde \kappa%
}{n-1} -(\frac{ \mbox{{\rm tr}}\; T}{2}-\frac{\Delta_1F}{4F})\bigr)-\frac{%
\tilde \kappa}{n-1}\Bigr)\widetilde G\,.
\]
Whence $L=\frac{1}{n-1}$ and 
\begin{equation}
\frac{\Delta_1F}{4F}-\frac{ \mbox{{\rm tr}}\; T }{2}\ =\  \frac{\tilde \kappa}{%
(n-1)(n-2)}\,.  \label{B8}
\end{equation}
It is worth to noticing that under above equalities (\ref{B7}) also holds. (%
\ref{B8}), in view of (\ref{W3}), takes the form 
\begin{equation}
F\,F^{\prime \prime }-(F^{\prime 2}+2\varepsilon C_1F\ =\ 0\,,\  \ C_1\ =\ 
\frac{\tilde \kappa}{(n-1)(n-2)}\,.  \label{B9}
\end{equation}
This is exactly equation (29) of \cite{DeSche1}. We can check that the
following functions are solutions of (\ref{B9}) (cf. \cite{DeSche1}, Lemma
3.1): 
\begin{eqnarray}
F(x^{1}) &=& \varepsilon C_{1}\, ( x^{1} + \frac{\varepsilon c}{C_{1}}%
)^{2}\, , \  \  \  \  \varepsilon C_{1} > 0 \, ,  \nonumber \\
F(x^{1}) &=& \frac{c}{2}\bigl( \exp(\pm \frac{b}{2} x^{1}) - \frac{2
\varepsilon C_{1}}{b^{2} c} \exp( \mp \frac{b}{2} x^{1}) \bigr)^{2}\, ,\  \  \
\ c > 0\, ,\  \  \  \ b \neq 0\, ,  \label{B10} \\
F(x^{1}) &=& \frac{2 \varepsilon C_{1}}{c}\, \bigl( 1 + \sin (c x^{1} + b)%
\bigr)\, , \  \  \  \  \frac{ \varepsilon C_{1}}{c} > 0\, ,  \label{B11}
\end{eqnarray}
where $b$ and $c$ are constants and $x^{1}$ belongs to a suitable non-empty
open interval of ${\mathbb{R}}$. The first form of $F$ leads to $\mbox{{\rm
tr}}\; T = 0$ and must be excluded in our case. Thus we have proved

\begin{theo}
Let $\overline M\times_F\widetilde N$ be a warped product manifold with $1$%
-dimensional base manifold $(\overline M,\bar g)$ and Einsteinian $(n-1)$%
-dimensional fiber $(\widetilde N,\tilde g)$. If (\ref{a1}) is satisfied on $%
\overline M\times_F\widetilde N$, then on the set $\mathcal{U}$ we have:
either 
\[
L=\frac{1}{n-2}\  \ and\  \ F(x^1)=(ax^1+b)^2\,,\ a,b\in \mathbb{R}\,,
\]
either 
\begin{eqnarray*}
L=\frac{1}{n-1}\  \ and\  \ F(x^{1}) &=& \frac{c}{2}\bigl( \exp(\pm \frac{b}{2}
x^{1}) - \frac{2 \varepsilon C_{1}}{b^{2} c} \exp( \mp \frac{b}{2} x^{1}) %
\bigr)^{2}\, ,\  \ c > 0\,,\  \ b \neq 0\, , \\
or\  \ F(x^{1}) &=& \frac{2 \varepsilon C_{1}}{c}\, \bigl( 1 + \sin (c x^{1} +
b)\bigr)\, , \  \  \frac{ \varepsilon C_{1}}{c} > 0\,.
\end{eqnarray*}
\end{theo}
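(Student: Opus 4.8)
The plan is to work throughout on the set $\mathcal{U} = U \cap U_S \cap U_C$ and to reduce the tensor equation (\ref{a1}) to ordinary differential relations on the warping function $F$. The engine is the universal identity (\ref{ge}), which rewrites (\ref{a1}) in the equivalent form (\ref{a2}), namely $((n-2)L-1)\,Q(S,R) = P - \frac{\kappa}{n-1}\,Q(g,R)$. Everything then hinges on evaluating the local components of $Q(S,R)$, $P$ and $Q(g,R)$ once the fibre Einstein condition (\ref{E}) is imposed, so that (\ref{a2}) becomes a finite list of scalar relations among $\mathrm{tr}\,T$, $\frac{\Delta_1 F}{4F}$ and $L$.

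First I would substitute (\ref{E}) into the component formulas (\ref{W1})--(\ref{W4}), (\ref{V3}), (\ref{P2}) and (\ref{P3}). The Einstein hypothesis collapses all $\widetilde S$-dependent terms to multiples of $\tilde g$, and I expect the mixed block to satisfy $Q(S,R)_{1\beta 1\delta\lambda\mu} = P_{1\beta 1\delta\lambda\mu} = 0$, which is (\ref{B1}), while the purely fibre block of each tensor becomes one and the same multiple $F\eta\,Q(\tilde g,\widetilde R)$ of a single Tachibana tensor, with $\eta = \frac{\tilde\kappa}{n-1} - \frac{\mathrm{tr}\,T}{2} - \frac{(n-2)\Delta_1 F}{4F}$; these are (\ref{B4}) and (\ref{B5}). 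Consequently (\ref{a2}) collapses to just two scalar-coefficient tensor equations: one from the $1\beta\gamma\delta 1\mu$ components, giving (\ref{B6}), and one from the fibre components, giving (\ref{B7}).

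Next I would split on whether $\mathrm{tr}\,T$ vanishes. If $\mathrm{tr}\,T = 0$, then, since $Q(\tilde g,\widetilde R) \neq 0$ on $\mathcal{U}$, equation (\ref{B7}) reduces to $((n-2)L-1)(\frac{\tilde\kappa}{n-1} - \frac{(n-2)\Delta_1 F}{4F}) = 0$; the vanishing of the second factor would force $Q(S,R) = 0$ through (\ref{B1}), (\ref{B2}) and (\ref{B4}), contradicting $x \in \mathcal{U}$, so $L = \frac{1}{n-2}$, and integrating the equation $\mathrm{tr}\,T = 0$, i.e. $F'' - \frac{(F')^2}{2F} = 0$, gives $F(x^1) = (ax^1+b)^2$. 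If instead $\mathrm{tr}\,T \neq 0$, then (\ref{B6}) rearranges into $(n-2)((n-1)L-1)\,\widetilde R$ being equal to a scalar multiple of $\widetilde G$; because $\widetilde R \neq \frac{\tilde\kappa}{(n-1)(n-2)}\widetilde G$ on $\mathcal{U}$, the coefficient of $\widetilde R$ must vanish, which forces $L = \frac{1}{n-1}$ together with the relation (\ref{B8}).

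The remaining, and in my view principal, step is to solve the nonlinear ordinary differential equation (\ref{B9}), $F\,F'' - (F')^2 + 2\varepsilon C_1 F = 0$ with $C_1 = \frac{\tilde\kappa}{(n-1)(n-2)}$, to which (\ref{B8}) is equivalent via (\ref{W3}). I would reduce this autonomous second-order equation to first order by the usual device of treating $F'$ as a function of $F$, integrate once, and thereby verify that its solutions are exhausted, up to a translation of $x^1$, by the exponential family (\ref{B10}) and the trigonometric family (\ref{B11}). Finally I would discard the pure-square solution of (\ref{B9}), since it returns $\mathrm{tr}\,T = 0$ and hence belongs to the first branch rather than the $\mathrm{tr}\,T \neq 0$ case. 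Assembling the two branches yields precisely the two alternatives in the statement; the main obstacle throughout is the careful bookkeeping of the curvature components under (\ref{E}) and the clean exclusion of the degenerate factors using the definition of $\mathcal{U}$.
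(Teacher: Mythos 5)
Your proposal follows essentially the same route as the paper's own proof: reduce (\ref{a1}) to (\ref{a2}) via the identity (\ref{ge}), substitute the Einstein condition (\ref{E}) into the component formulas to obtain (\ref{B1})--(\ref{B5}) and hence the two scalar equations (\ref{B6}) and (\ref{B7}), then split on $\mathrm{tr}\,T$ to get $L=\frac{1}{n-2}$ with $F=(ax^1+b)^2$ in one branch and $L=\frac{1}{n-1}$ with (\ref{B8})--(\ref{B9}) in the other, discarding the pure-square solution there. The only deviation is that you solve the ODE (\ref{B9}) directly by reduction of order rather than citing Lemma 3.1 of \cite{DeSche1}, which is a sound (and in fact more self-contained) substitute.
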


Taking into account the proof of Theorem 5.1 we easily obtain

\begin{cor}
If $F$ is of the form (\ref{B10}) or (\ref{B11}) then $\overline{M} \times
_{F} \widetilde{N}$ with $1$-dimensional base manifold and Einstein fibre
satisfies (\ref{a1}) with $L = \frac{1}{n-1}$.
\end{cor}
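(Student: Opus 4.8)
The plan is to reverse the reduction performed in the proof of Theorem 5.1. For an Einstein fibre (\ref{E}), that proof computed all the possibly non-vanishing local components of $Q(S,R)$ and $P$ in (\ref{B1})--(\ref{B5}), and thereby showed that condition (\ref{a1}), written in the form (\ref{a2}), is equivalent to the two tensorial equalities (\ref{B6}) and (\ref{B7}); the purely mixed components (\ref{B1}) vanish identically and impose nothing. Hence I need not recompute any curvature quantity: it suffices to verify that, for $F$ of the form (\ref{B10}) or (\ref{B11}) and for the value $L=\frac{1}{n-1}$, both (\ref{B6}) and (\ref{B7}) are satisfied.

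The first step is to record that each such $F$ solves the ordinary differential equation (\ref{B9}), as checked in the proof of Theorem 5.1, and that (\ref{B9}) is, via (\ref{W3}), merely the restatement of (\ref{B8}), i.e.
\[
\frac{\Delta_1 F}{4F}-\frac{\mbox{{\rm tr}}\; T}{2}\ =\ C_1\ =\ \frac{\tilde\kappa}{(n-1)(n-2)}\,.
\]
Inserting $L=\frac{1}{n-1}$, so that $(n-2)L-1=-\frac{1}{n-1}$, and substituting (\ref{B8}) into the definition of $\eta$ gives the single clean relation $\eta=-\frac{n-1}{2}\,\mbox{{\rm tr}}\; T$. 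These two facts are the whole engine of the argument.

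With them in hand, (\ref{B7}) is immediate, since its left-hand factor $\bigl((n-2)L-1\bigr)\,\eta=-\frac{1}{n-1}\cdot\bigl(-\frac{n-1}{2}\,\mbox{{\rm tr}}\; T\bigr)=\frac{\mbox{{\rm tr}}\; T}{2}$ reproduces the right-hand coefficient of $Q(\tilde g,\widetilde R)$. For (\ref{B6}) the coefficient of $\widetilde R$ on the left reduces to $\frac{\mbox{{\rm tr}}\; T}{2}$, matching the right, while the coefficient of $\widetilde G$ is handled by rewriting $\frac{\tilde\kappa}{n-1}-\bigl(\frac{\mbox{{\rm tr}}\; T}{2}-\frac{\Delta_1 F}{4F}\bigr)=\frac{\tilde\kappa}{n-2}$ with the aid of (\ref{B8}); both sides then collapse to $-\frac{\mbox{{\rm tr}}\; T}{2}\cdot\frac{\tilde\kappa}{(n-1)(n-2)}\,\widetilde G$. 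Consequently (\ref{a2}), equivalently (\ref{a1}), holds for every system of indices with $L=\frac{1}{n-1}$.

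The only content beyond this bookkeeping is the observation, already made in the proof of Theorem 5.1, that for (\ref{B10}) and (\ref{B11}) one has $\mbox{{\rm tr}}\; T\neq 0$, so that these genuinely fall into case (ii); the excluded polynomial solution $F=\varepsilon C_1(x^1+\varepsilon c/C_1)^2$ of (\ref{B9}) is precisely the one producing $\mbox{{\rm tr}}\; T=0$ and hence the $L=\frac{1}{n-2}$ branch. Thus $\mathcal{U}$ is non-empty and $L=\frac{1}{n-1}$ is the operative constant. I expect the algebraic matching of the $\widetilde G$-coefficient in (\ref{B6}) through (\ref{B8}) to be the only mildly delicate point; everything else is direct substitution into the equivalences already established for Theorem 5.1.
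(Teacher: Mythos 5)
Your proposal is correct and follows essentially the same route as the paper, whose proof of this corollary is precisely to read the reduction in the proof of Theorem 5.1 backwards: for an Einstein fibre, (\ref{a1}) is equivalent to (\ref{B6}) and (\ref{B7}), and since (\ref{B10}) and (\ref{B11}) solve (\ref{B9}), i.e.\ satisfy (\ref{B8}), both equalities hold with $L=\frac{1}{n-1}$. Your explicit verification (via $\eta=-\frac{n-1}{2}\,\mbox{tr}\;T$ and the identity $\frac{\tilde\kappa}{n-1}+C_1=\frac{\tilde\kappa}{n-2}$) simply fills in the algebra the paper leaves implicit in the remark that ``under above equalities (\ref{B7}) also holds.''
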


\section{Examples}

Corollaries 4.1 and 5.1 give rise to examples of warped products satisfying (%
\ref{a1}) with Einstein fibre. The problem of finding of a warped product
satisfying (\ref{a1}) with non-Einstein fibre reduces, via Theorem 4.2, to
the problem of finding of an example of a semi-Riemannian manifold $(%
\widetilde{N},\widetilde{g})$, $\dim \widetilde{N} = n-1 \geqslant 3$,
fulfilling (\ref{SR2}). To obtain a suitable example we will use results of 
\cite{DeGlo1, DGHV,DH102}. First of all, we adopt to our consideration
results contained in Theorem 3.1 of \cite{DeGlo1} and in Theorem 3.2 of \cite%
{DGHV}. Those results we can present in the following

\begin{theo}
Let $(\widetilde{N},\widetilde{g})$ be a hypersurface isometrically immersed
in a semi-Riemannian space of constant curvature $N_{s}^{n}(c)$, $n
\geqslant 4$, with signature $(s,n-s)$, where $c = \frac{\tau }{(n-1)n}$, $%
\tau$ is the scalar curvature of the ambient space and $\widetilde{g}$ is
the metric tensor induced on $\widetilde{N}$. Moreover, let the second
fundamental tensor $H$ of $\widetilde{N}$ satisfies on some non-empty
connected set $\widetilde{U} \subset \widetilde{N}$ the equation 
\begin{equation}
H^{3}\ =\ tr(H)\, H^{2} + \lambda \, H,  \label{E1}
\end{equation}
where $\lambda $ is some function on $\widetilde{U}$ and let the constant $%
\varepsilon = \pm 1$ be defined by the Gauss equation of $\widetilde{N}$ in $%
N_{s}^{n}(c)$, i.e. by 
\begin{eqnarray*}
\widetilde{R} \ =\  \frac{\varepsilon}{2}\, H \wedge H +\frac{\tau}{(n-1)n}\, 
\widetilde{G}\,.
\end{eqnarray*}
(i) (cf. \cite{DeGlo1}, Theorem 3.1) On $\widetilde{U}$ we have 
\begin{equation}
\widetilde{S}_{\mu}^{\  \epsilon}\widetilde R_{\epsilon \beta \gamma \delta} \
=\  \mu \, \bigl( \widetilde{R}_{\mu \beta \gamma \delta} - \frac{\tau }{(n-1)n}
\, \widetilde{G}_{\mu \beta \gamma \delta}\bigr) + \frac{\tau }{(n-1)n} \, (%
\widetilde{g}_{\beta \gamma}\widetilde{S}_{\mu \delta} - \widetilde{g}%
_{\beta \delta}\widetilde{S}_{\mu \gamma}),  \label{E2}
\end{equation}
where $\mu = \frac{(n-2) \tau }{(n-1)n} - \varepsilon \, \lambda$. \newline
(ii) (cf. \cite{DGHV}, Theorem 3.2) If $n \geqslant 5$ then on $\widetilde{U}
$ we have 
\begin{equation}
(n-3)\, (\widetilde{R} \cdot \widetilde{C} - \widetilde{C} \cdot \widetilde{R%
}) \ =\ Q(\widetilde{S}, \widetilde{R}) + \bigl(\frac{(n-2) \tau }{(n-1) n}
- \varepsilon \, \lambda - \frac{\widetilde{\kappa}}{n-2}\bigr) Q(\widetilde{%
g}, \widetilde{R})\,,  \label{E3}
\end{equation}
where $\widetilde{\kappa}$ is the scalar curvature of $\widetilde{N}$.
\end{theo}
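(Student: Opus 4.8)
The plan is to handle the two parts separately, using the Gauss equation to reduce both to algebraic manipulations of the shape operator $H$ and then invoking the cubic relation (\ref{E1}).

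For part (i), I would first write the Gauss equation in local components as
\[
\widetilde{R}_{\alpha \beta \gamma \delta} \ =\ \varepsilon\,(H_{\alpha \delta}H_{\beta \gamma} - H_{\alpha \gamma}H_{\beta \delta}) + \frac{\tau}{(n-1)n}\,\widetilde{G}_{\alpha \beta \gamma \delta},
\]
and contract it to obtain the Ricci tensor $\widetilde{S}_{\beta \gamma} = \varepsilon\,(tr(H)\,H_{\beta \gamma} - (H^{2})_{\beta \gamma}) + \frac{(n-2)\tau}{(n-1)n}\,\widetilde{g}_{\beta \gamma}$, where $(H^{2})_{\beta \gamma} = H_{\beta}^{\ \epsilon}H_{\epsilon \gamma}$. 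Substituting both expressions into $\widetilde{S}_{\mu}^{\ \epsilon}\widetilde{R}_{\epsilon \beta \gamma \delta}$ splits the product into four pieces according to the ``$H$-part'' and the ``$\widetilde{G}$-part'' of each factor. The crux of the whole argument sits in the piece coming from the two $H$-parts: it produces terms containing $(H^{3})_{\mu \delta}$ and $(H^{3})_{\mu \gamma}$, and it is precisely (\ref{E1}), written as $(H^{3})_{\mu \delta} = tr(H)\,(H^{2})_{\mu \delta} + \lambda\,H_{\mu \delta}$, that makes every $(H^{2})\!\cdot\!H$ contribution cancel, leaving only $-\lambda\,(H_{\mu \delta}H_{\beta \gamma} - H_{\mu \gamma}H_{\beta \delta})$. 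Re-expressing this and the mixed pieces back through $\widetilde{R}$, $\widetilde{S}$ and $\widetilde{G}$ (the pure-$\widetilde{G}$ contributions cancel against one another), and recognizing the coefficient of $\widetilde{R}_{\mu \beta \gamma \delta} - \frac{\tau}{(n-1)n}\widetilde{G}_{\mu \beta \gamma \delta}$ as $\frac{(n-2)\tau}{(n-1)n} - \varepsilon\lambda = \mu$, yields exactly (\ref{E2}).

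For part (ii), I would specialize the general identity (\ref{ge}) to the $(n-1)$-dimensional fibre $(\widetilde{N},\widetilde{g})$, obtaining
\[
(n-3)(\widetilde{R}\cdot \widetilde{C} - \widetilde{C}\cdot \widetilde{R}) \ =\ Q(\widetilde{S},\widetilde{R}) - \frac{\widetilde{\kappa}}{n-2}\,Q(\widetilde{g},\widetilde{R}) + \widetilde{P},
\]
which is the same identity already exploited in (\ref{D4}). Everything then reduces to evaluating $\widetilde{P}$. Using part (i), I compute $\widetilde{V}_{\alpha \beta \gamma \delta} = \widetilde{S}_{\alpha}^{\ \epsilon}\widetilde{R}_{\epsilon \beta \gamma \delta}$ from (\ref{E2}) and substitute it into the definition of $\widetilde{P}$: the curvature part $\mu\,\widetilde{R}$ reproduces $\mu\,Q(\widetilde{g},\widetilde{R})$, the pure-$\widetilde{G}$ term contributes nothing since $Q(\widetilde{g},\widetilde{G}) = 0$, and the $\widetilde{g}\,\widetilde{S}$-part yields $-\frac{\tau}{(n-1)n}\bigl(Q(\widetilde{S},\widetilde{G}) + \widetilde{g}\wedge Q(\widetilde{g},\widetilde{S})\bigr)$, which vanishes by the identity $\widetilde{g}\wedge Q(\widetilde{g},\widetilde{S}) = -Q(\widetilde{S},\widetilde{G})$ quoted in the proof of Proposition 4.1. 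This is structurally the same computation as there, with $\varepsilon a^{2}$ replaced by $\frac{\tau}{(n-1)n}$ and $\frac{\widetilde{\kappa}}{n-1}$ by $\mu$, so I expect it to go through verbatim and give $\widetilde{P} = \mu\,Q(\widetilde{g},\widetilde{R})$. Substituting back and recalling $\mu = \frac{(n-2)\tau}{(n-1)n} - \varepsilon\lambda$ produces (\ref{E3}).

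I expect the main obstacle to be purely the bookkeeping: in part (i), correctly expanding the four products and carrying out the cubic substitution without sign errors, and in part (ii), tracking the signs in the long defining expression of $\widetilde{P}$. Conceptually, however, the only nontrivial idea is that (\ref{E1}) collapses the $H^{3}$-terms in part (i); once (\ref{E2}) is in hand, part (ii) is a matter of assembling already-available identities.
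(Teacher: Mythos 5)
Your proposal is correct, but note that the paper does not actually prove this statement: Theorem 6.1 is presented as an adaptation of Theorem 3.1 of \cite{DeGlo1} (part (i)) and Theorem 3.2 of \cite{DGHV} (part (ii)), with nothing beyond the citations, so your argument supplies a self-contained derivation of what the paper imports from elsewhere. Part (i) checks out exactly as you describe: writing the Gauss equation as $\widetilde{R}_{\alpha \beta \gamma \delta}=\varepsilon (H_{\alpha \delta}H_{\beta \gamma}-H_{\alpha \gamma}H_{\beta \delta})+\frac{\tau}{(n-1)n}\widetilde{G}_{\alpha \beta \gamma \delta}$ and contracting gives $\widetilde{S}_{\beta \gamma}=\varepsilon (tr(H)H_{\beta \gamma}-(H^{2})_{\beta \gamma})+\frac{(n-2)\tau}{(n-1)n}\widetilde{g}_{\beta \gamma}$; in the fourfold expansion of $\widetilde{S}_{\mu}^{\ \epsilon}\widetilde{R}_{\epsilon \beta \gamma \delta}$ the relation (\ref{E1}) collapses the $H$-times-$H$ piece to $-\varepsilon \lambda \,\bigl(\widetilde{R}_{\mu \beta \gamma \delta}-\frac{\tau}{(n-1)n}\widetilde{G}_{\mu \beta \gamma \delta}\bigr)$, the mixed pieces produce $\frac{(n-2)\tau}{(n-1)n}\bigl(\widetilde{R}_{\mu \beta \gamma \delta}-\frac{\tau}{(n-1)n}\widetilde{G}_{\mu \beta \gamma \delta}\bigr)$ plus the $\tilde{g}\,\widetilde{S}$ terms, the pure $\widetilde{G}$ contributions cancel, and (\ref{E2}) follows with $\mu =\frac{(n-2)\tau}{(n-1)n}-\varepsilon \lambda$. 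For part (ii), your key observation --- that (\ref{E2}) is precisely (\ref{SR2}) with $\varepsilon a^{2}$ replaced by $\frac{\tau}{(n-1)n}$ and $\frac{\tilde{\kappa}}{n-1}$ by $\mu$ --- is the right one: the identity (\ref{ge}) applied to the $(n-1)$-dimensional fibre is exactly (\ref{D4}), the computation $\widetilde{P}=\mu \,Q(\tilde{g},\widetilde{R})$ is then verbatim the computation in the paper's proof of Proposition 4.1 (including the use of $\tilde{g}\wedge Q(\tilde{g},\widetilde{S})=-Q(\widetilde{S},\widetilde{G})$), and substitution gives (\ref{E3}); this requires $n-1\geqslant 4$, which is why the hypothesis $n\geqslant 5$ appears. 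The paper itself implicitly endorses this correspondence when it observes in Section 6 that (\ref{E3}) together with (\ref{E4}) yields (\ref{D3}). So your route stays entirely inside the paper's own toolkit; what it buys is an actual proof where the paper offers only a citation.
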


We note that (\ref{E2}) implies immediately that $\widetilde{R} \cdot 
\widetilde{S} = \frac{\tau }{(n-1)n} \, Q(\tilde{g}, \widetilde{S})$. In
addition, if we assume that on $\widetilde{U}$ we have 
\begin{equation}
\lambda = 0\  \  \  \  \mbox{and}\  \  \  \ (n-2) \tau = n \widetilde{\kappa}
\label{E4}
\end{equation}
then (\ref{SR2}) holds on $\widetilde{U}$. The last remark suggests a
solution of our problem. Namely, the last two conditions are realized on the
hypersurface presented in Example 5.1 of \cite{DH102}. Let $(M,g)$ be the
manifold defined in Example 5.1 of \cite{DH102}. We denote it by $(%
\widetilde{N}, \widetilde{g})$. Clearly $(\widetilde{N}, \widetilde{g})$ is
a manifold of dimension $\geqslant 4$. However, it is easy to verify that if
we repeat the construction of $(\widetilde{N}, \widetilde{g})$ for the $3$%
-dimensional case then all curvature properties remain true excluding, of
course, properties expressed by its Weyl conformal curvature tensor. Thus
without loss of generality we can assume that $\dim \, \widetilde{N} = n-1
\geqslant 3$. In Example 5.1 of \cite{DH102}, among other things, it was
shown that $(\widetilde{N}, \widetilde{g})$ is locally isometric to a
hypersurface in a semi-Riemannian space of non-zero constant curvature.
Since our considerations are local, we can assume that $(\widetilde{N}, 
\widetilde{g})$ is a hypersurface isometrically immersed in that space.
Since $(\widetilde{N}, \widetilde{g})$ fulfils (\ref{E4}), Theorem 5.1
finishes our construction. We note that by making use of (\ref{E3}) and ({%
\ref{E4}), we obtain (\ref{D3}). \newline
}

\noindent \textbf{Remark 6.1.} (i) The Roter type warped products $\overline
M\times_F\widetilde N$ with $1$-dimensional base manifold $(\overline
M,\overline g)$ and non-Einstein $(n-1)$-dimensional fiber $(\widetilde
N,\widetilde g)$, $n \geqslant 4$, were investigated in \cite{DeSche1}.
Among other results it was proved that the curvature tensor $\widetilde{R}$
of the fiber $(\widetilde N,\widetilde g)$ is expressed by the
Kulkarni-Nomizu tensors $\widetilde{S} \wedge \widetilde{S}$, $\widetilde{g}
\wedge \widetilde{S}$ and $\widetilde{G}$, i.e. the fiber also is a Roter
type manifold, provided that $n \geqslant 5$. Therefore, if we assume that
the fiber manifold $(\widetilde{N}, \widetilde{g})$ considered in Theorem
6.1 is a non-pseudosymmetric Ricci-pseudosymmetric hypersurface, for
instance the Cartan hypersurfaces of dimension $6$, $12$ or $24$ have this
property (see e.g. \cite{G6}), then fibers of both constructions are
non-isometric. \newline
(ii) From (\ref{R77}), by a suitable contraction, we get 
\begin{eqnarray}
C \cdot S &=& L \, Q(g,S)\,.  \label{R777}
\end{eqnarray}
We refer to \cite{Kow1} and \cite{P72} for examples of warped products
satisfying (\ref{R777}). The condition (\ref{R777}) holds on some
hypersurfaces in semi-Riemannian space forms, and in particular, on the
Cartan hypersurfaces (\cite{DeGlo1}, Theorems 3.1 and 4.3). Recently,
hypersurfaces in semi-Euclidean space satisfying (\ref{R777}) were
investigated in \cite{Ozg}. \newline
(iii) We also can investigate semi-Riemannian manifolds $(M,g)$, $n
\geqslant 4$, satisfying on $U_{C} \subset M$ the following condition of
pseudosymmetric type (see e.g. \cite{DDP, DH01}) 
\begin{eqnarray}
R \cdot R - Q(S,R) &=& L\, Q(g,C)\, ,  \label{R877}
\end{eqnarray}
where $L$ is some function on this set. Warped products satisfying (\ref%
{R877}) were investigated in \cite{DDP}. Among other results, in \cite{DDP}
it was shown that this condition is satisfied on every $4$-dimensional
warped product $\overline M\times_F\widetilde N$ with $1$-dimensional base.
Thus in particular, every $4$-dimensional generalized Robertson-Walker
spacetime satisfies (\ref{R877}). We mention that (\ref{R877}) holds on
every hypersurface in a semi-Riemannian space of constant curvature (see
e.g. \cite{DGHSaw}, eq. (4.4)). \newline
(iv) In \cite{DGHS} (Example 4.1) a warped product $\overline
M\times_F\widetilde N$ of an $(n-1)$-dimensional base $(\overline
M,\overline g)$, $n \geqslant 4$, and an $1$-dimensional fibre $(\widetilde
N,\widetilde g)$ satisfying $\mbox{rank}\, S = 1$, $\kappa = 0$, $R \cdot R
= 0$ and $C \cdot S = 0$ was constructed. In addition, we can easily check,
that (\ref{a1}) with $L = \frac{1}{n-2}$ and $Q(S,C) = Q(S,R)$ hold on $%
\overline M \times_F \widetilde N$ (\cite{DGHZ01}). Therefore on $\overline
M \times_F \widetilde N$ we also have $(n-2)\, (R\cdot C - C \cdot R) =
Q(S,C)$. Semi-Riemannian manifolds satisfying $R\cdot C - C \cdot R = L \,
Q(S,C)$, for some function $L$, are investigated in \cite{DGHZ01}. An
example of a quasi-Einstein non-Ricci-simple manifold satisfying the last
condition is given in Section 6 of \cite{DGHSaw}.

\section{Conclusions}

Let $\overline M\times_F\widetilde N$ be the warped product of an $1$%
-dimensional manifold $(\overline M,\overline g)$, $\overline g_{11} =
\varepsilon = \pm 1$, the warping function $F: {\overline{M}} \rightarrow {%
\mathbb{R}}^{+}$ and an $(n-1)$-dimensional, $n \geqslant 4$,
semi-Riemannian manifold $(\widetilde N, \widetilde g)$.

If $(\widetilde N, \widetilde g)$ is a semi-Riemannian space of constant
curvature then $\overline M\times_F\widetilde N$ is a quasi-Eintein
conformally flat pseudosymmetric manifold. Evidently, the Friedmann-Lema%
\^{\i}tre-Robertson-Walker spacetimes belong to this class of manifolds.
Further, if the fibre $(\widetilde N, \widetilde g)$, $n \geqslant 5$, is an
Einstein manifold, which is not of constant curvature, then $\overline
M\times_F\widetilde N$ is a quasi-Einstein non-conformally flat
non-pseudosymmetric Ricci-pseudosymmetric manifold. In this case the
difference tensor $R\cdot C - C \cdot R$ is expressed by a linear
combination of the Tachibana tensors $Q(g,R)$ and $Q(S,R)$ (\cite{CDGP}).

If the fibre $(\widetilde N, \widetilde g)$, $n \geqslant 4$, is a
conformally flat Ricci simple manifold such that its scalar curvature $%
\widetilde{\kappa}$ vanishes then $\overline M\times_F\widetilde N$ is a
non-conformally flat pseudosymmetric manifold, provided that $F = F(x^{1}) =
\exp x^{1}$ (\cite{DDHKS}, Proposition 4.2 and Example 4.1). In addition we
have $(n-1)\, (R\cdot C - C \cdot R) = Q(S,C)$ (\cite{DGHZ01}).

If the fibre $(\widetilde N, \widetilde g)$, $n \geqslant 4$, is some Roter
type manifold and the warping function $F$ satisfies (\ref{B9}) then $%
\overline M\times_F\widetilde N$ is a Roter type manifold, and in a
consequence a non-conformally flat pseudosymmetric manifold (\cite{DeSche1},
Theorem 5.1). As it was mentioned in Section 2, the tensor $R\cdot C - C
\cdot R$ is expressed by a linear combination of some Tachibana tensors.

The above presented facts show that under some conditions imposed on the
fibre or the fibre and the warping function of a generalized
Robertson-Walker spacetime such spacetime is a pseudosymmetric or
Ricci-pseudosymmetric manifold and its difference tensor $R\cdot C - C \cdot
R$ is expressed by a linear combination of some Tachibana tensors. In this
paper we consider an inverse problem. Namely, if the tensors $R\cdot C - C
\cdot R$ and $Q(S,R)$ are linearly dependend on a generalized
Robertson-Walker spacetime then we determine the warping function, as well
as curvature properties of the fibre of such spacetime. In the case when the
considered generalized Robertson-Walker spacetimes are $4$-dimensional
manifolds, it is possible to apply the algebraic classification of
space-times satisfying some conditions of pseudosymmetry type given in \cite%
{P109}, see also \cite{EriSen, HV02}.

\vspace{5mm} {\footnotesize \noindent }

{\footnotesize 
\begin{tabular}{llll}
Kadri Arslan, Ridvan Ezenta\d s & Ryszard Deszcz & Marian Hotlo\'{s} &  \\ 
and Cengizhan Murathan &  &  &  \\ 
Department of Mathematics & Department of Mathematics & Institute of
Mathematics &  \\ 
Art and Science Faculty & Wroc\l aw University & and Computer Science &  \\ 
Uluda\v{g} University & of Environmental & Wroc\l aw University &  \\ 
16059 Bursa, TURKEY & and Life Sciences & of Technology &  \\ 
\textsf{arslan@uludag.edu.tr } & Grunwaldzka 53 & Wybrze\.{z}e Wyspia\'{n}%
skiego 27 &  \\ 
\textsf{rezentas@uludag.edu.tr} & 50-357 Wroc\l aw, POLAND & 50-370 Wroc\l %
aw, POLAND &  \\ 
\textsf{cengiz@uludag.edu.tr} & \textsf{ryszard.deszcz@up.wroc.pl} & \textsf{%
marian.hotlos@pwr.wroc.pl} & 
\end{tabular}
}

\end{document}